\documentclass[12pt]{amsart}
\usepackage{amsfonts}
\usepackage{amsfonts,latexsym,rawfonts,amsmath,amssymb,amsthm}
\usepackage[plainpages=false]{hyperref}
\usepackage{graphicx}
\UseRawInputEncoding
\numberwithin{equation}{section}

\RequirePackage{color}
 \textwidth = 6.0 in
 \textheight = 7.9 in
\oddsidemargin = 3 ex
\evensidemargin= 3 ex

\theoremstyle{plain}

\newtheorem{theorem}{Theorem}[section]

\newtheorem{corollary}{Corollary}[section]

\newtheorem{lemma}[theorem]{Lemma}

\newtheorem{pro}[theorem]{Problem}

\newcommand{\beq}{\begin{equation}}
\newcommand{\eeq}{\end{equation}}
\newcommand{\beqs}{\begin{eqnarray*}}
\newcommand{\eeqs}{\end{eqnarray*}}
\newcommand{\beqn}{\begin{eqnarray}}
\newcommand{\eeqn}{\end{eqnarray}}
\newcommand{\beqa}{\begin{array}}
\newcommand{\eeqa}{\end{array}}

\def\phi{\varphi}

\numberwithin{equation}{section}



\begin{document}
\setlength{\baselineskip}{1.2\baselineskip}
\title[The $L_p$ dual Minkowski problem]
{Regularities for solutions to the $L_p$ dual Minkowski problem for unbounded closed sets}

\author{Li Chen}
\address{Faculty of Mathematics and Statistics, Hubei Key Laboratory of Applied Mathematics, Hubei University,  Wuhan 430062, P.R. China}
\email{chernli@163.com}

\author{Qiang Tu}
\address{Faculty of Mathematics and Statistics, Hubei Key Laboratory of Applied Mathematics, Hubei University,  Wuhan 430062, P.R. China}
\email{qiangtu@hubu.edu.cn}



\date{}
\begin{abstract}

Recently, the $L_p$ dual Minkowski problem for unbounded closed convex sets in a pointed closed convex
cone was proposed and a weak solution to this problem was provided.
In smooth setting, this problem is equivalent to
solving the Dirichlet problem for a class of Monge-Amp\`ere type equations.

In this paper, we show the existence, regularity and uniqueness of solutions to
this Monge-Amp\`ere type equation in the case $p\geq 1$ by studying variational properties for a family of Monge-Amp\`ere functionals.
Moreover, the existence and optimal global H\"older regularity in the case $p<1$ and $q\geq n$ is also be discussed.
\end{abstract}

\maketitle {\it \small{{\bf Keywords}: The $L_p$ dual Minkowski problem, Monge-Amp\`ere type equations, $C$-close sets.}}


\section{Introduction}

The main purpose of this paper is to study the $L_p$ dual Minkowski problem
for unbounded convex sets in views of PDEs. Such type of problem is an analogue
of the classical Minkowski type problem
concerning convex bodies (compact convex sets with nonempty interiors)
which has a long history and strong influence in convex geometry and PDEs.
Examples of the Minkowski type problem concerning convex bodies
include the classical Minkowski problem \cite{Sch13},
the $L_p$ Minkowski problem \cite{LE1}, the dual Minkowski problem \cite{Huang16},
the $L_p$ dual Minkowski problem \cite{LYZ-18} and so on.

The Minkowski type problem related to unbounded convex sets has also been studied by
Chou-Wang \cite{CW95}, Pogorelov \cite{Po80} and Urbas \cite{Ur84} for
unbounded, complete and convex hypersurfaces two decades ago. An $L_p$ version can be found
in \cite{HL21} by Huang-Liu. Recently, Schneider \cite{Sch-18, Sch-21} proposed
the Minkowski problem for unbounded closed convex set in a closed convex cone.
Soon, the corresponding $L_p$ Minkowski problem, dual Minkowski problem
and $L_p$ dual Minkowski problem were proposed by Yang-Ye-Zhu \cite{YYZ}, Li-Ye-Zhu \cite{LYZ}
and Ai-Yang-Ye \cite{AYY} respectively.

In the smooth setting, the $L_p$ dual Minkowski problem for unbounded closed convex set
in a closed convex cone \cite{AYY} is equivalent to solving the Dirichlet problem
of the Monge-Amp\`ere type equation \begin{equation}\label{Eq}
\left\{
\begin{aligned}
&(-h)^{1-p}\det (\nabla^2h+h I)=f[|\nabla h|^2+h^2]^{\frac{n-q}{2}} \quad \mbox{in} \quad \Omega,&\\
&h=0 \quad \mbox{on} \quad \partial \Omega,
\end{aligned}
\right.
\end{equation}
where $\Omega$ is an open convex set in $S^{n-1}$,
$f$ is a positive smooth function on $\overline{\Omega}$, $h$ is the unknown function, $I$
is the identity matrix, $\nabla h$ and $\nabla^2h$ are the gradient and the Hessian of $h$ on $S^{n-1}$.
A weak solution to the Dirichlet problem \eqref{Eq} was provided in \cite{AYY}.
Thus, it is interesting to study the regularities
of solutions to the Dirichlet problem \eqref{Eq}.

In order to study the regularities, it is convenient to express
the equation \eqref{Eq} in Euclidean space.
According to Lemma \ref{app-lem1}, the problem \eqref{Eq} is equivalent to the following Dirichlet problem for the Monge-Amp\`ere type equation in Euclidean space
\begin{equation}\label{Eq-transfer-1}
\left\{
\begin{aligned}
&\det (D^2 u)=g(x)(-u)^{p-1}\left[|D u|^2+(x\cdot Du-u)^2\right]^{\frac{n-q}{2}}\quad \mbox{in} \quad U, &\\
&u=0 \quad \mbox{on} \quad \partial U,
\end{aligned}
\right.
\end{equation}
where $U$ is an open convex set in $\mathbb{R}^{n-1}$,
$g$ is a positive smooth function on $\overline{U}$ (see \eqref{gg}),
$\nabla u$ and $\nabla^2 u$ are the gradient and the Hessian of $u$ on $\mathbb{R}^{n-1}$.

The problem \eqref{Eq-transfer-1} is a special case of
the following Dirichlet problem for the Monge-Amp\`ere equation which has been widely studied,
\begin{equation}\label{Eq-transfer-2}
\left\{
\begin{aligned}
&\det (D^2 u)=F(x, u, Du) \quad \mbox{in} \quad U \subset \mathbb{R}^{n},&\\
&u=0 \quad \mbox{on} \quad \partial U.
\end{aligned}
\right.
\end{equation}
The equation \eqref{Eq-transfer-2} was first studied by Pogorelov in \cite{P071}. When $F$ is independent of $Du$ and $F_u>0$,
Cheng-Yau obtained the existence and uniqueness of solutions to the equation \eqref{Eq-transfer-2} in \cite{Ch-Yau-77}. Then, Caffarelli-Nirenberg-Spruck \cite{CNS1} and Krylov \cite{Kr82} obtained the smoothness of solution for the
equation \eqref{Eq-transfer-2} up to the boundary under further regularity conditions for $F$. When $F_u$ is not necessarily positive, Caffarelli-Nirenberg-Spruck \cite{CNS1} solved the equation \eqref{Eq-transfer-2} under the assumption of the existence of a subsolution.
However, constructing such a subsolution is a difficult task. A different approach without constructing a subsolution was taken by
Tso \cite{Tso90}. He used a variational approach for a family of Monge-Amp\`ere functionals, which was introduced by Bakelman in \cite{Ba-61, Ba-83}, to study such problems. Recently, the analogous variational approach was introduced by Tong-Yau \cite{TY} to study
the solvability of the Dirichlet problem
\begin{equation*}\label{Eq-transfer-3}
\left\{
\begin{aligned}
&\det (D^2 u)=\lambda (u^{\ast})^{-k} (-u)^{l} \quad \mbox{in} \quad U, &\\
&u=0 \quad \mbox{on} \quad \partial U,
\end{aligned}
\right.
\end{equation*}
where $\lambda \in \mathbb{R}$, $k>0, l\geq 0$ and $u^\ast=x\cdot Du-u$.

Following the idea of \cite{Tso90, TY}, we wish to find a variational structure for 
the Dirichlet problem \eqref{Eq}, and use this to undertake a variational study for \eqref{Eq}.
An important ingredient in our variational approach is a Sobolev type inequality for $q$-volume (see Lemma \ref{Vq}).
Let $\Omega$ be an open set with the smooth boundary in $S^{n-1}$, we call $\Omega$ is strictly
convex domain in $S^{n-1}$ if the cone $\widehat{\Omega}=\{\lambda x \mid x \in \Omega, \lambda>0\}$
is a strictly convex domain in $\mathbb{R}^{n}$. The following is our first main result.

\begin{theorem}\label{th-main-1}
Let $\Omega$ be an open, bounded, smooth and strictly convex domain in $S^{n-1}$, $f$ be a positive smooth function on $\overline{\Omega}$
and $p\geq 1$.
\begin{enumerate}
\item[{\em(\romannumeral1)}] If  $q>p$, then there exists a unique and non-zero solution  $h \in C^{\infty}(\overline{\Omega})$
to the Dirichlet problem \eqref{Eq}.
\item [{\em(\romannumeral2)}] If $p=q$, then there exists a unique and non-zero $\lambda$ such that the Dirichlet problem \eqref{Eq}  with $f$ replaced by $\lambda f$ admits a non-zero solution $h\in C^{\infty}(\overline{\Omega})$. Moreover, the solution is unique up to scaling by a positive constant.
\item [{\em(\romannumeral3)}] If $p>q\geq n$, then there exists a non-zero solution  $h \in C^{\infty}(\overline{\Omega})$ to the Dirichlet problem \eqref{Eq}.
\end{enumerate}
\end{theorem}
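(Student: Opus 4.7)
The plan is to follow the variational strategy foreshadowed after equation (1.5), adapting the approaches of Tso and of Tong--Yau to the specific right-hand side of the reduced Euclidean equation \eqref{Eq-transfer-1}. First, via Lemma \ref{app-lem1}, I pass from the spherical problem \eqref{Eq} on $\Omega$ to the Dirichlet problem \eqref{Eq-transfer-1} for a convex function $u$ vanishing on $\partial U$, where $U\subset\mathbb{R}^{n-1}$ is the image of $\Omega$ under a radial projective chart; the strict convexity of $\Omega$ translates into uniform convexity of $\partial U$, a property that will be crucial for boundary regularity later, while $f\in C^{\infty}(\overline{\Omega})$ with $f>0$ converts to a positive density $g\in C^{\infty}(\overline U)$.

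On the admissible class $\mathcal{A}=\{u\in C(\overline U): u\ \text{convex},\ u|_{\partial U}=0,\ u<0\ \text{in}\ U\}$ I would then introduce the two functionals
\[
I(u)=-\int_U u\,\det(D^2u)\,dx,\qquad J_{p,q}(u)=\int_U g(x)\,\Phi_{p,q}(u,Du,x)\,dx,
\]
where $\Phi_{p,q}$ is a primitive chosen, in the spirit of Tong--Yau's use of $u^{\ast}=x\cdot Du-u$, so that the first variation of $J_{p,q}$ against a test function yields $g(x)(-u)^{p-1}[|Du|^2+(x\cdot Du-u)^2]^{(n-q)/2}$. A direct scaling computation under $u\mapsto tu$, $t>0$, gives $I(tu)=t^{n}\,I(u)$ and $J_{p,q}(tu)=t^{p+n-q}\,J_{p,q}(u)$, so the critical exponent is $q=p$, and the three parts of Theorem \ref{th-main-1} correspond respectively to the subcritical ($q>p$), critical ($q=p$), and supercritical ($p>q$) regimes.

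For case (i), $q>p$, I would solve $\inf\{I(u):u\in\mathcal{A},\ J_{p,q}(u)=1\}$; existence of a minimizer follows from the uniform $C^{0,1}$ bound for normalized convex functions, the Alexandrov maximum principle, and the weak continuity of the Monge--Amp\`ere measure under local uniform convergence of convex functions, and the Lagrange multiplier from the Euler--Lagrange equation can be absorbed by rescaling precisely because $q\neq p$. Case (ii), $p=q$, is critical: the multiplier cannot be scaled away, and this is exactly how the constant $\lambda$ arises; uniqueness up to positive rescaling will follow from a Brunn--Minkowski type inequality for the mixed functional, in the spirit of the uniqueness proofs for the logarithmic and $L_p$ Minkowski problems. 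Case (iii), $p>q\geq n$, is supercritical, so I would reverse the extremum to $\sup\{I(u):u\in\mathcal{A},\ J_{p,q}(u)=1\}$; the hypothesis $q\geq n$, which makes the exponent $(n-q)/2$ non-positive, provides the coercivity required to prevent concentration of maximizing sequences along the scaling ray, echoing the role of $q\geq n$ in the classical dual Minkowski problem.

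The two main obstacles I foresee are (a) the coercivity and nondegeneracy analysis in the supercritical case (iii), where $q\geq n$ must be exploited sharply to keep maximizing sequences bounded and the limit genuinely convex rather than degenerate in the interior, and (b) upgrading the Alexandrov/weak solution delivered by the variational argument to a classical $C^{\infty}(\overline U)$ solution. For (b) the plan is to combine Caffarelli's interior strict convexity and $C^{1,\alpha}$ theory, Krylov's boundary $C^{2,\alpha}$ estimates for Monge--Amp\`ere equations on uniformly convex domains (which is where uniform convexity of $\partial U$ enters), and a Schauder bootstrap driven by $f\in C^{\infty}(\overline\Omega)$; the degeneracy of the factor $(-u)^{p-1}$ near $\partial U$ is benign for $p\geq 1$ and can be controlled by comparison with $\dist(\cdot,\partial U)$-type barriers, exactly as in the corresponding boundary regularity for \eqref{Eq-transfer-3}.
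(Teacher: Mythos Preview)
Your variational framework is in the right spirit, but two steps are genuine gaps rather than details. First, the construction of $J_{p,q}$ via a primitive $\Phi_{p,q}(u,Du,x)$ is the crux and you have not carried it out: the factor $[|Du|^2+(x\cdot Du-u)^2]^{(n-q)/2}$ depends on $Du$ in a way that does not obviously admit a local Lagrangian whose Euler--Lagrange term is exactly the right-hand side of \eqref{Eq-transfer-1} (the Tong--Yau situation is special because $u^{\ast}=x\cdot Du-u$ by itself has a clean variational identity). The paper resolves this by the \emph{opposite} splitting: the gradient factor is absorbed into the kinetic part through the $q$-volume $V_q(h)=\frac{1}{q}\int_\Omega \rho^{q-n}(-h)\det(\nabla^2h+hI)\,dx$ with $\rho^2=|\nabla h|^2+h^2$, and a nontrivial integration-by-parts (Lemma~2.1) shows $\delta V_q(h)[\phi]=-\int_\Omega\rho^{q-n}\phi\det(\nabla^2h+hI)$, so the potential is simply $\frac{1}{p}\int_\Omega(\varepsilon-h)^p f$. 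Without this or an equivalent device your Euler--Lagrange identification does not go through.

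Second, for (iii) the plan to ``reverse the extremum to $\sup$'' in the supercritical regime $p>q$ does not work: the constrained supremum is typically infinite or not attained, and $q\ge n$ is not a coercivity mechanism in the sense you describe. The paper instead runs Tso's mountain-pass argument, deforming a path by the parabolic gradient flow of $\mathcal{J}_\varepsilon$ and extracting the $C^0$ bound from the decay of $-\frac{d}{dt}\mathcal{I}_\varepsilon$ via a measure-splitting estimate (Lemma~3.6); the hypothesis $q\ge n$ enters only to produce, by a comparison barrier, the lower bound $x\cdot Du-u\ge c>0$ (inequality \eqref{Gra-2}) that keeps the flow uniformly parabolic. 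Cases (i) and (ii) in the paper are likewise obtained through this parabolic flow (long-time existence from Appendix~5.2) rather than by direct constrained minimization, and uniqueness in (ii) is proved by a maximum-principle/Hopf-lemma argument (Theorem~\ref{q-critical-exist}, Step~3), not by a Brunn--Minkowski inequality.
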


The existence and uniqueness of smooth solutions to
the $L_p$ dual Minkowski problem \eqref{Lp dual} for convex bodies
have been proved in \cite{HZ} for $p>q$ and in \cite{Chen-L19} for $p=q\neq 0$.
For the other case $p<q$, the uniqueness may fails \cite{CCL21, LLL22, JWW21, JWW23}.
Thus, although the Monge-Amp\`ere equations \eqref{Eq} and \eqref{Lp dual} differ from
each other only by a negative sign, their solvability seems to be quite different.

It would be desirable to obtain the existence of solutions to the Dirichlet problem \eqref{Eq} in the case $p<1$, but we have not been able to do this by the variational approach. In fact, the equation \eqref{Eq} becomes a singular Monge-Amp\`ere function
in the case $p<1$, and the high order regularity of solutions to the equation \eqref{Eq} may fail up to boundary. In details, we get the following result.

\begin{theorem}\label{th-main-2}
Assume $p<1$ and $q\geq n\geq 3$. Let $\Omega$ be an open, bounded, smooth and convex domain in $S^{n-1}$, $f\in C^{\infty}(\Omega) \cap C(\overline{\Omega})$ with $f>0$.
Then  there exist a unique nontrivial solution  $h\in C^{\infty}(\Omega)\cap C^{\frac{q-n+2}{q-p}}(\overline{\Omega})$ to the equation \eqref{Eq} with the following estimate
\begin{equation}
|h(x)| \leq C(n,p,q, \mbox{diam}(\Omega), \sup f) [\mbox{dist}(x, \partial \Omega)]^{\frac{q-n+2}{q-p}}
\end{equation}
for any $x\in \Omega$. Moreover, the exponent $\frac{q-n+2}{q-p}$ is optimal, i.e., for any $a\in (\frac{q-n+2}{q-p}, 1)$, there exist a bounded convex domain $\Omega\subset S^{n-1}$ such that
the solution $h$ of the equation \eqref{Eq} satisfies $h\notin C^a(\overline{\Omega})$.
\end{theorem}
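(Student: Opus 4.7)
The plan is to pass to the Euclidean formulation, solve by approximation, and extract uniform barriers that pin down the sharp H\"older exponent. By Lemma \ref{app-lem1}, problem \eqref{Eq} is equivalent to the Dirichlet problem \eqref{Eq-transfer-1} on a bounded strictly convex domain $U\subset\mathbb{R}^{n-1}$. For $p<1$ the factor $(-u)^{p-1}$ is singular on $\partial U$, so I regularize by replacing it with $(-u+\varepsilon)^{p-1}$. For each $\varepsilon>0$ the resulting equation has smooth, bounded right-hand side and admits a smooth convex solution $u_{\varepsilon}$, either by Caffarelli--Nirenberg--Spruck or by an iteration based on Theorem \ref{th-main-1} at $p=1$ with the nonlinear $u$-term frozen and updated.

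The core step is a uniform two-sided estimate $c\,d(x)^{\alpha}\le|u_{\varepsilon}(x)|\le C\,d(x)^{\alpha}$ with $\alpha=(q-n+2)/(q-p)$ and $d(x)=\dist(x,\partial U)$, obtained by constructing a subsolution $\underline u=-C_{1}d^{\alpha}$ and a supersolution $\bar u=-c_{1}d^{\alpha}$ of the regularized equation, uniformly in $\varepsilon$. The correct exponent is forced by matching the leading powers of $d$ in $\det D^{2}(-d^{\alpha})$ with those coming from $g(x)(-u)^{p-1}[|Du|^{2}+(x\cdot Du-u)^{2}]^{(n-q)/2}$. The Fermi-coordinate computation near $\partial U$, accounting for the second fundamental form (which enters the tangential eigenvalues) and for the asymptotics of $|Du|$ and $x\cdot Du-u$, pins down precisely $\alpha=(q-n+2)/(q-p)$; strict convexity of $\partial U$ allows the constants $C_{1},c_{1}$ to be chosen so that both inequalities hold. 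Comparison then yields the claimed two-sided estimate. The lower bound gives nondegeneracy on compact subsets of $U$, so Pogorelov interior $C^{2}$ estimates together with Evans--Krylov and bootstrapping produce uniform $C^{k}$ bounds on compact sets; passing $\varepsilon\to 0$ along a subsequence produces $u\in C^{\infty}(U)\cap C^{\alpha}(\bar U)$, and pulling back via Lemma \ref{app-lem1} gives $h\in C^{\infty}(\Omega)\cap C^{(q-n+2)/(q-p)}(\overline{\Omega})$.

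Uniqueness follows from the standard comparison principle: for $p<1$, the right-hand side of \eqref{Eq} is monotone in $h$ in the direction that admits a touching-point argument, and the explicit lower barrier precludes the trivial solution. For optimality of the exponent I take $\Omega$ to be a small spherical cap on which radial symmetry reduces \eqref{Eq} to an ODE in the geodesic distance to $\partial\Omega$; a matched asymptotic expansion at the boundary of the cap produces a solution satisfying $|h(x)|\sim c_{0}\,\dist(x,\partial\Omega)^{(q-n+2)/(q-p)}$ to leading order, which rules out $C^{a}$ regularity for any $a>(q-n+2)/(q-p)$. The principal obstacle is the simultaneous verification of the sub- and supersolution properties with the \emph{same} exponent $\alpha$: tracking how the cross term $x\cdot Du-u$ interacts with the degenerate Hessian of $-d^{\alpha}$ on a non-axisymmetric strictly convex domain is delicate, and this is where the strict convexity and smoothness hypotheses on $\partial\Omega$ are used most heavily.
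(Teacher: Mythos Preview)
Your overall plan---transfer to the Euclidean problem via Lemma~\ref{app-lem1}, regularize $(-u)^{p-1}$ to $(\varepsilon-u)^{p-1}$, obtain uniform barriers, pass to the limit, and then invoke Pogorelov together with Evans--Krylov for interior smoothness---matches the paper's scheme. Two points, however, do not go through as written.

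\emph{The barrier and the exponent.} If $d=\dist(\cdot,\partial U)$ on a smooth strictly convex $U$, then in Fermi coordinates the tangential eigenvalues of $D^{2}(-d^{\alpha})$ are of order $\alpha\kappa_{i}\,d^{\alpha-1}$ (coming from the second fundamental form), while the normal eigenvalue is $\alpha(1-\alpha)d^{\alpha-2}$. Matching powers of $d$ on both sides of \eqref{Eq-transfer-1} then forces $\alpha=q/(q-p)$, \emph{not} $(q-n+2)/(q-p)$; the discrepancy of $n-2$ in the numerator comes precisely from the $n-2$ tangential directions, where your barrier behaves like $d^{\alpha-1}$ rather than $d^{\alpha}$. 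The paper avoids this by working not with the distance to the curved boundary but with the distance to a supporting \emph{hyperplane} at the nearest boundary point and inserting a quadratic factor: the subsolution is $v_{a}(x)=x_{n-1}^{a}(|x'|^{2}-C)$, whose tangential eigenvalues are $\sim x_{n-1}^{a}$, and this is what produces the exponent $(q-n+2)/(q-p)$. Consequently your claimed two-sided estimate $c\,d^{\alpha}\le|u_{\varepsilon}|\le C\,d^{\alpha}$ with $\alpha=(q-n+2)/(q-p)$ cannot hold on a smooth strictly convex $U$: the lower bound fails because the curvature actually forces faster decay.

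\emph{Optimality.} For the same reason, the optimality argument on a spherical cap cannot succeed. On any smooth strictly convex domain the boundary curvature improves regularity (the natural scaling is $d^{q/(q-p)}$, strictly more regular than $d^{(q-n+2)/(q-p)}$ since $n\ge 3$), so the ODE reduction on a cap will \emph{not} produce a solution that fails to lie in $C^{a}$ for the given range of $a$. The paper's optimality example is built on a domain with a \emph{flat} face, namely $U=\{|x'|<1,\ 0<x_{n-1}<(1-|x'|^{2})^{s}\}$, and uses the explicit supersolution $w(x)=Cx_{n-1}-Cx_{n-1}^{a}(1-|x'|^{2})^{b}$ with $b=(q-1)/(q-p)$ and $s=b/(1-a)$; flatness of the face $\{x_{n-1}=0\}$ is exactly what prevents the curvature gain and pins the solution at the slower rate $d^{a}$ along the axis. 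The statement only asserts the \emph{existence} of a bad convex domain, and that domain must be taken non-uniformly-convex.
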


The ideas for the proof of the above result comes from the study for the following Dirichlet problem
\begin{equation}\label{Eq-transfer-4}
\left\{
\begin{aligned}
&\det (D^2 u)= (u^{\ast})^{-k} (-u)^{l} \quad \mbox{in} \quad U\subset \mathbb{R}^n,&\\
&u=0 \quad \mbox{on} \quad \partial U,
\end{aligned}
\right.
\end{equation}
where $l<0$ and $k>0$. When  $k=0$ and $l=-n-2$,  Cheng-Yau \cite{Ch-Yau-77} obtained the existence result for the equation \eqref{Eq-transfer-4}. Then Le \cite{LNQ-1} extended the existence result to the case $l<0$.  When $l=-n-k-2$, the equation \eqref{Eq-transfer-4} was related to proper affine hyperspheres and Chen-Huang \cite{Chen-L19} showed the existence of solutions
to the equation \eqref{Eq-transfer-4} in the space $C^{\infty}(\Omega)\cap C(\overline{\Omega})$ via the
regularization method. Moreover, Le \cite{LNQ-1, LNQ-2} established the optimal global H\"older regularity of solutions.

The rest of the paper is organized as follows. In Section 2, we start with some
preliminaries. The proofs of Theorem \ref{th-main-1}  are given in section 3.
In section 4, the existence result and optimal global H\"older regularity for solutions to the equation \eqref{Eq}
in the case $p<1$ and $q\geq n$ are established.
In the appendix, we establish some basic a priori estimates for the elliptic and parabolic Monge-Amp\`ere equations.

\section{Preliminaries}

In this section, we collect the necessary background, preliminaries, and notations.
More details can be found in \cite{Huang16, LYZ-18, Ur-1}
for convex bodies and in \cite{YYZ, LYZ, Sch-18} for $C$-close convex sets.

\subsection{Convex bodies and their associated $L_p$ dual Minkowski problem}

Let $\mathbb{R}^n$ be the $n$-dimensional Euclidean space. The unit sphere in $\mathbb{R}^n$ is denoted by
$S^{n-1}$. A convex body in $\mathbb{R}^n$ is a compact convex set with nonempty interior. Denote
by $\mathcal{K}^{n}_{0}$ the class of convex bodies in $\mathbb{R}^n$ that contain the origin in their interiors.
The support function $h: \mathbb{R}^n\rightarrow \mathbb{R}$ of a convex body $K$ is defined as
\begin{equation*}
h_K(x)=\max \{ x \cdot y: y \in K\},
\end{equation*}
where $\cdot$ is the standard inner product in $\mathbb{R}^n$.
The radial function $\rho$ of $ K \in \mathcal{K}^{n}_{0}$ is defined as
\begin{equation*}
\rho_K(u)=\max\{\lambda>0 : \lambda u\in K\}.
\end{equation*}

For a convex body $K$, its $L_p$ surface area measure
$S_p(K, \cdot)$ is defined by Lutwak \cite{LE1},
\begin{eqnarray*}
S_{p}(K, \omega)=\int_{\nu_{K}^{-1}(\omega)} (x \cdot \nu_{K}(x))^{1-p}dx
\end{eqnarray*}
for any Borel set $\omega\subset S^{n-1}$, where the set $\nu_{K}^{-1}(\omega)$ is the inverse image
of $\omega$ under the Gauss map $\nu_K$ of $K$.
If $p=1$, it is just the surface area measure of $K$. Recently, Huang-LYZ in \cite{Huang16} proposed a fundamental family
of geometric measures in the dual Brunn-Minkowski theory: the dual curvature measure which is defined by
\begin{eqnarray*}
\widetilde{C}_{q}(K, \omega)=\frac{1}{n} \int_{\alpha_{K}^{\ast}(\omega)} h_K^{-p}(\alpha_{K}(u)) \rho_K^q(u) du
\end{eqnarray*}
for any Borel set $\omega \subset S^{n-1}$, where $\alpha_{K}^{\ast}(\omega)$ is
the radial Gauss image of $K$ given by
\begin{eqnarray*}
\alpha_{K}^{\ast}(\omega)=\{u \in S^{n-1}: u\rho_K(u) \in \nu_{K}^{-1}(\omega)\}.
\end{eqnarray*}
Later, LYZ in \cite{LYZ-18} unified the $L_p$ surface area measure and the dual curvature measure by introducing
the $L_p$ dual curvature measure
\begin{eqnarray*}
\widetilde{C}_{p, q}(K, \omega)=\frac{1}{n} \int_{\alpha_{K}^{\ast}(\omega)} h_K^{-p}(\alpha_{K}(u)) \rho_K^q(u) du
\end{eqnarray*}
for any Borel set $\omega \subset S^{n-1}$. It is worth pointing out that
the $L_p$-dual curvature measure becomes the $L_p$ surface area measure for $q=n$ and the dual curvature measure for $p=0$.

The following $L_p$ dual Minkowski problem was posed in \cite{LYZ-18}.
\begin{pro}\label{problem-L-1}
For $p, q \in \mathbb{R}$, under what conditions on a non-zero finite Borel measure $\mu$ defined on $S^{n-1}$, can one
find $K \in \mathcal{K}^{n}_{0}$ such that
$$\mu=\widetilde{C}_{p, q}(K, \cdot)?$$
\end{pro}

The $L_p$ dual Minkowski problem becomes the $L_p$ Minkowski problem for $q=n$ \cite{LE1} and
the dual Minkowski problem for $p=0$ \cite{Huang16}.
When the given measure $\mu$ has a density $f$, the $L_p$ dual Minkowski problem
is equivalent to solving the following Monge-Amp\`ere type equation on $S^{n-1}$:
\begin{equation}\label{Lp dual}
h^{1-p}\det (\nabla^2 h+h I)=f[|\nabla h|^2+h^2]^{\frac{n-q}{2}},
\end{equation}
where $f$ is a smooth function on $S^{n-1}$, $h$ is the unknown function, $I$
is the identity matrix, $\nabla h$ and $\nabla^2h$ are the gradient and the Hessian of $h$ on $S^{n-1}$.

\subsection{$C$-close convex sets and their associated $L_p$ dual Minkowski problem}

A set $C\subseteq \mathbb{R}^n$ is said to be a closed convex cone, if $C$ is closed and convex such that
the interior of $C$ is nonempty and $\lambda x \in C$ for all $x \in C$ and $\lambda\geq 0$. If
$C\cap \{-x: x \in C\}=\{o\}$, then the closed convex cone $C$ is called a pointed cone.
For a pointed closed convex cone $C$, its polar cone is denoted by $C^{\circ}$ and defined by
\begin{eqnarray*}
C^{\circ}=\{ x \in \mathbb{R}^n:  x \cdot y \leq 0 \ \mbox{for all} \ y \in C\}.
\end{eqnarray*}

Let $C$ be a pointed closed convex cone with nonempty interior and
$A=C \backslash \mathbb{A}$ for any $\mathbb{A}\subsetneq C$. For a closed convex set $\mathbb{A}\subsetneq  C$, if
$0<V_n(A)<\infty$, we call $\mathbb{A}$ a $C$-close set and $A$ a $C$-coconvex set, while if $A$ is bounded and nonempty,
we call $\mathbb{A}$ a $C$-full set. Note that $o \notin \mathbb{A}$ if $\mathbb{A}$ is $C$-close or $C$-full.

Most concepts for convex bodies can be defined for $C$-close set (with slight or without changes).
For example, the support function of a $C$-close set $\mathbb{A}$ can be defined by
\begin{equation}
h_C(\mathbb{A}, x)=\sup \{ x \cdot y: y \in \mathbb{A}\}, \quad x\in \Omega_{C^\circ},
\end{equation}
where $\Omega_{C^{\circ}}=S^{n-1}\cap \mathrm{int} C^{\circ}$. Note that
$o \not\in \mathbb{A}$ and hence $-\infty<h_C(\mathbb{A}, x)<0$ for any $x \in \Omega_{C^{\circ}}$.
Let $\Omega_{C}=S^{n-1}\cap \mathrm{int} C$.
The radial function of $\mathbb{A}$ is defined by
\begin{eqnarray*}
\rho_C(\mathbb{A}, u)=\sup\{r>0 : r u \in C\backslash \mathbb{A} \}, \quad u \in \Omega_C.
\end{eqnarray*}
At $u\in \Omega_{C}$, $\rho_C(\mathbb{A}, u)$ could be finite or $\infty$ depending on whether $\mathbb{A}$ intersects with $\partial C$ at the direction $u$.

\begin{lemma}
Assume that $\mathbb{A}$ is a $C$-full set, we have
\begin{eqnarray}\label{hro}
\max_{\Omega_{C^\circ}}|h_{C}(\mathbb{A}, \cdot)|=\min_{\Omega_{C}} \rho_C(\mathbb{A}, \cdot).
\end{eqnarray}
\end{lemma}

\begin{proof}
Assume that
\begin{eqnarray*}
\rho_C(\mathbb{A}, u_{min})=\min_{\Omega_C} \rho_C(\mathbb{A}, \cdot),
\quad h_{C}(\mathbb{A}, x_{min})=\min_{\Omega_{C^\circ}}h_{C}(\mathbb{A}, \cdot).
\end{eqnarray*}
On one hand, by the definition, we have
\begin{eqnarray*}
h_{C}(\mathbb{A}, x_{min})\geq \rho_C(\mathbb{A}, u_{min})u_{min} \cdot x_{min}\geq-\rho_C(\mathbb{A}, u_{min}).
\end{eqnarray*}
Thus,
\begin{eqnarray*}
\max_{\Omega_{C^\circ}}|h_{C}(\mathbb{A}, \cdot)|\leq\min_{\Omega_{C}} \rho_C(\mathbb{A}, \cdot).
\end{eqnarray*}
On the other hand, we have
\begin{eqnarray*}
|h_{C}(\mathbb{A}, x_{min})|\geq \rho_C(\mathbb{A}, x_{min})\geq \min_{\Omega_{C}} \rho_C(\mathbb{A}, \cdot).
\end{eqnarray*}
So, we complete the proof.
\end{proof}

Let $\mathbb{A}$ a $C$-close set. Assume $\partial \mathbb{A}\subsetneq \mathrm{int} C$ is
smooth and strictly convex with $\lim_{x\rightarrow \partial\Omega_{C^\circ}}h_{C}(\mathbb{A}, x)=0$.
Clearly, $h_{C}(\mathbb{A}, x) \in (-\infty, 0)$ for all $x \in \Omega_{C^\circ}$ due to
$o \not\in \mathbb{A}$. In this case, $\partial \mathbb{A}$ can be determined by its radical function
$\rho_{C}(\mathbb{A}, \cdot)$. If $x \in \Omega_{C^\circ}$ is the outer normal of $\partial \mathbb{A}$ at
the point $u \in \partial \mathbb{A}$, then $u=h_{C}(\mathbb{A}, x)x+\nabla h_{C}(\mathbb{A}, x)$. This gives
\begin{eqnarray}\label{LYZ1.4}
\rho_C(\mathbb{A}, u)=\sqrt{|h_{C}(\mathbb{A}, x)|^2+|\nabla h_{C}(\mathbb{A}, x)|^2}.
\end{eqnarray}
Moreover,
\begin{eqnarray}\label{LYZ1.5}
d u=\rho_C^{-n}(-h_{C})\mathrm{det}(\nabla^2 h_{C}+ h_{C}I)dx.
\end{eqnarray}

Inspired by the $L_p$ dual curvature measure introduced by LYZ for convex bodies \cite{LYZ-18}, Ai-Yang-Ye \cite{AYY} introduce the $L_p$ dual curvature measure for a $C$-close set $\mathbb{A}$
\begin{equation*}
\widetilde{C}_{p, q}(\mathbb{A}, \omega)=\frac{1}{n} \int_{\alpha_{\mathbb{A}}^{\ast}(\omega)} h_C^{-p}(\mathbb{A}, \alpha_{\mathbb{A}}(u)) \rho_C^q(\mathbb{A}, u) du,
\end{equation*}
where $\omega $ is a Borel set in $\Omega_{C^{\circ}}$ and $\alpha_{\mathbb{A}}^{\ast}(\cdot)$ is the reverse radial Gauss image of $\mathbb{A}$ (see (4.6) in \cite{LYZ} for the definition).
It is worth pointing out that $\widetilde{C}_{p, q}(\mathbb{A}, \cdot)$ is
the $L_p$ surface area measure for $q=n$ \cite{YYZ}
and the $q$-th dual curvature measure for $p=0$ \cite{LYZ}.
Thus, the following $L_p$ dual Minkowski problem for $C$-close sets is proposed in \cite{AYY}.
\begin{pro}\label{problem-1}
For $p, q \in \mathbb{R}$, under what conditions on a nonzero finite Borel measure $\mu$ defined on $\Omega_{C^\circ}$, can one find a $C$-close set $\mathbb{A}$ such that
$$\mu=\widetilde{C}_{p, q}(\mathbb{A}, \cdot)?$$
\end{pro}

Obviously, Problem \ref{problem-1} unifies the $L_p$ Minkowski problem for $C$-close sets \cite{YYZ} and the dual Minkowski problem for $C$-close sets  \cite{LYZ}. In particular, when the given measure $\mu$ has a density $f$, Problem \ref{problem-1}
is equivalent to solving the Dirichlet problem \eqref{Eq} with $\Omega=\Omega_{C^\circ}$.

\subsection{The $q$-volume functional and the Sobolev type inequality}

If $\mathbb{A}$ is a $C$-full set, then
$0<\rho_C(\mathbb{A}, u)<\infty$ for any $u \in \mathbb{S}^{n-1} \cap \partial C$. The $q$-volume
of $C \backslash \mathbb{A}$ or $A$ is defined by
\begin{equation}\label{con}
V_q(C \backslash \mathbb{A})=\frac{1}{q} \int_{\Omega_C} \rho^q_C(\mathbb{A}, u) du.
\end{equation}
When $q=n$, it is just the volume of $A$. Moreover,
if $\partial \mathbb{A}$ is a smooth hypersurface, using \eqref{LYZ1.4} and \eqref{LYZ1.5}, we have
\begin{equation}\label{con-1}
V_q(C \backslash \mathbb{A})=\frac{1}{q}\int_{\Omega_{C^\circ}}(|h_{C}|^2+|\nabla h_{C}|^2)^{\frac{q-n}{2}}(-h_C)\det (\nabla^2 h_C+h_C I)dx.
\end{equation}
Thus, we define the $q$-volume functional with respect with $h_C$
\begin{equation}\label{q-V}
V_q(h_C)=\frac{1}{q}\int_{\Omega_{C^\circ}}(|h_{C}|^2+|\nabla h_{C}|^2)^{\frac{q-n}{2}}(-h_C)\det (\nabla^2 h_C+h_C I)dx.
\end{equation}
Now, we will calculate the first variation of $V_q$ with respect with $h_C$.
For convenience, we denote by $\Omega=\Omega_{C^\circ}$,
$h(x)=h_C(\mathbb{A}, x)$ and $\rho(u)=\rho_C(\mathbb{A}, u)$.

\begin{lemma}
Let $\mathbb{A}_t$ be a family of $C$-full sets with the support function $h(\cdot, t)$ satisfying
$h(\partial \Omega, t)=0$. We denote by $h=h(\cdot, 0)$ and $\varphi=\frac{d}{d t}\Big|_{t=0}h(\cdot, t)$.
Then, the first variation of $V_q$ at $h$ with respect to
$\phi$ is given by
\begin{eqnarray}\label{Var}
q\cdot\delta V_q(h)[\phi]=-q\int_{\Omega}\rho^{q-n}\phi\det (\nabla^2 h+h I) \ dx,
\end{eqnarray}
where $\rho=\sqrt{h^2+|\nabla h|^2}$.
\end{lemma}

\begin{proof}
For convenience, we denote by $b=\nabla^2 h+h I$, $b_{ij}=h_{ij}+h\delta_{ij}$ and
$(b^{ij})=(b_{ij})^{-1}$. Then, using $(\log \mathrm{det}\ b)_k=b^{ij}b_{ij k}$,
the first variation of $V_q$ at $h$ with respect to
$\phi$ is given by
\begin{eqnarray*}
&&q\cdot\delta V_q(h)[\phi]
\\&=&q\frac{d}{dt}\bigg|_{t=0}V_q(h(\cdot, t))
\\&=&-\int_{\Omega}\rho^{q-n}\phi\det b \ dx
-(q-n)\int_{\Omega}\rho^{q-n-2}(\nabla h \cdot \nabla \varphi +h\phi)h\det b \ dx
\\&&+\int_{\Omega}\rho^{q-n}(-h) b^{ij}(\varphi_{ij}+\varphi \delta_{ij})\det b \ dx.
\end{eqnarray*}
Then, using the fact $(b^{ij}\det b)_j=0$ and $h|_{\partial \Omega}=0$, and integrating by parts give
\begin{eqnarray*}
&&q\cdot\delta V_q(h)[\phi]
\\&=&-\int_{\Omega}\rho^{q-n}\phi\det b \ dx-(q-n)\int_{\Omega}\rho^{q-n-2}(\nabla h \cdot \nabla \varphi +h\phi) h \det b \ dx
\\&&+\int_{\Omega}\rho^{q-n}(-h) \varphi b^{ij} \delta_{ij}\det b \ dx
+\int_{\Omega}\rho^{q-n}h_j \varphi_{i} b^{ij}\det b \ dx
\\&&+(q-n)\int_{\Omega}h \rho^{q-n-1}\rho_j \varphi_{i} b^{ij}\det b \ dx.
\end{eqnarray*}
Then, using $(b^{ij}\det b)_j=0$ again and
\begin{equation*}
b^{ij}\rho_j\varphi_i=\frac{1}{\rho}\nabla h \cdot \nabla \varphi, \quad
b^{ij}\rho_jh_i=\frac{1}{\rho}|\nabla h|^2,
\end{equation*}
we have
\begin{eqnarray*}
&&q\cdot\delta V_q(h)[\phi]\\&=&-\int_{\Omega}\rho^{q-n}\phi\det b \ dx
-(q-n)\int_{\Omega}\rho^{q-n-2}h^2\phi \det b \ dx
\\&&+\int_{\Omega}\rho^{q-n}(-h) \varphi b^{ij} \delta_{ij}\det b \ dx
+\int_{\Omega}\rho^{q-n}h_j \varphi_{i} b^{ij}\det b \ dx
\\&=&-\int_{\Omega}\rho^{q-n}\phi\det b \ dx
-(q-n)\int_{\Omega}\rho^{q-n-2}h^2\phi \det b \ dx
\\&&+\int_{\Omega}\rho^{q-n}(-h) \varphi b^{ij} \delta_{ij}\det b \ dx
-\int_{\Omega}\rho^{q-n}h_{ij} \varphi b^{ij}\det b \ dx
\\&&-(q-n)\int_{\Omega}\rho^{q-n-1}\rho_ih_{j} \varphi b^{ij}\det b \ dx
\\&=&-\int_{\Omega}\rho^{q-n}\phi\det b \ dx-(q-n)\int_{\Omega}\rho^{q-n}\phi \det b \ dx
\\&&-(n-1)\int_{\Omega}\rho^{q-n}\varphi\det b \ dx
\\&=&-q\int_{\Omega}\rho^{q-n}\phi\det b \ dx.
\end{eqnarray*}
So, we complete the proof.
\end{proof}

In particular, for $q=0$, we have
\begin{equation*}
\delta\bigg(\int_{\Omega}\frac{(-h)\det (\nabla^2 h+h I)}{\rho^{n}}dx\bigg)=0.
\end{equation*}
Thus,
\begin{equation*}
\int_{\Omega}\frac{(-h)\det (\nabla^2 h+h I)}{\rho^{n}}dx=const.
\end{equation*}
Moreover, we have

\begin{corollary}
If $\mathbb{A}$ is a $C$-full set with the support function $h$ satisfying $h|_{\partial \Omega}=0$,
then we have
\begin{equation}\label{Sob}
\int_{\Omega}\frac{(-h)\det (\nabla^2 h+h I)}{\rho^{n}}dx=\mathrm{Area}(\Omega_C).
\end{equation}
\end{corollary}

\begin{proof}
The equality \eqref{Sob} can be easily deduced by
\eqref{con} and \eqref{con-1}
\begin{equation*}
\int_{\Omega}\frac{(-h)\det (\nabla^2 h+h I)}{\rho^{n}}dx=\int_{\Omega_C} du=\mathrm{Area}(\Omega_C).
\end{equation*}
\end{proof}

Using this corollary, we can easily deduce the following Sobolev type inequality for the $q$-volume functional.

\begin{lemma}\label{Vq}
Let $\mathbb{A}$ be a $C$-full set with the support function $h$ satisfying $h|_{\partial \Omega}=0$ and $q>0$.
Then,
\begin{equation*}
V_q(h)\geq \frac{\mathrm{Area}(\Omega_C)}{q}\parallel h \parallel_{C^{0}(\Omega)}^{q}.
\end{equation*}
Thus,
\begin{equation*}
V_q(h)\geq \frac{\mathrm{Area}(\Omega_C)}{q \mathrm{Vol}(\Omega)} \int_{\Omega} |h|^q dx.
\end{equation*}
\end{lemma}

\begin{proof}
Using \eqref{hro} and \eqref{Sob}, we have
\begin{eqnarray*}
V_q(h)&=&\frac{1}{q}\int_{\Omega}\rho^{q-n}(-h)\det (\nabla^2 h+h I)dx
\\&\geq& \frac{\mathrm{Area}(\Omega_C)}{q}(\min_{\Omega}\rho)^{q}
\\&=&\frac{\mathrm{Area}(\Omega_C)}{q}\parallel h \parallel_{C^{0}(\Omega)}^{q},
\end{eqnarray*}
as claimed.
\end{proof}

\section{The Dirichlet problem in the case $p\geq1$}

By Theorem 1.3 in \cite{Sa14} and Theorem 1.2 in \cite{LS17}, it is
possible to obtain the existence of smooth solutions to
the Dirichlet problem \eqref{Eq} for $p\geq1$. We follow
the ideas in \cite{Tso90} to find the variational functional of the Dirichlet problem \eqref{Eq}.
Then, we obtain the existence by using the corresponding parabolic gradient flow.
In fact, the parabolic gradient flow method is widely used to prove the
existence of smooth solutions to the Minkowski type problems, see
\cite{CW, CW00, BIS19, LSW20, CHZ19, Chen-L19, LL20, CLLN22, CTWX22, CWX22}
and the references therein.

The argument of the Dirichlet problem \eqref{Eq} is divided into three cases:

(1) Subcritical case: $p<q$;

(2) Supercritical case: $p>q$;

(3) Critical case: $p=q$.

In this section, let $\Omega$ be an open, bounded, smooth and strictly convex domain in $S^{n-1}$, $f\in C^{\infty}(\overline{\Omega})$ with $f>0$ and $h_0$ be the support function of a smooth $C$-full set with $h_0(\partial \Omega)=0$.

\subsection{Subcritical case}

\subsubsection{A parabolic gradient flow}

Since the original equation \eqref{Eq} becomes degenerate or singular
at the boundary, we modify the original equation \eqref{Eq} by a perturbation
\begin{equation}\label{1-Eq}
\left\{
\begin{aligned}
&[|\nabla h|^2+h^{2}]^{\frac{q-n}{2}}\det (\nabla^2 h+hI)=
(\varepsilon-h)^{p-1}f \quad \mbox{in} \quad  \Omega ,&\\
&h=0 \quad \mbox{on} \quad  \partial \Omega.
\end{aligned}
\right.
\end{equation}

The equation \eqref{1-Eq} is the Euler-Lagrange equation of the the functional
\begin{eqnarray*}
\mathcal{J}_\varepsilon(h):=V_q(h)-\frac{1}{p}\int_{\Omega}(\varepsilon-h)^p f(x)dx.
\end{eqnarray*}
This fact can be easily seen by its variation
\begin{eqnarray}\label{VJ}
\delta\mathcal{J}_{\varepsilon}(h)[\varphi]=-\int_{\Omega}\phi\Big[\rho^{q-n}\det (\nabla^2 h+h I)-(\varepsilon-h)^{p-1} f\Big]dx.
\end{eqnarray}
This variation \eqref{VJ} can be derived by \eqref{Var}.

In this subsection, we will study a gradient flow of the functional $J_\varepsilon$. In details, we consider
the following parabolic equation with initial condition $h_0$:
\begin{equation}\label{t-Eq}
\left\{
\begin{aligned}
&h_t-\log \det (\nabla^2 h+hI)+\frac{n-q}{2}\log [|\nabla h|^2+h^2]=\log[(\varepsilon-h)^{1-p}f^{-1}]
\quad \mbox{in}  \quad \Omega_T,
&\\
&h=0 \quad \mbox{on}  \quad \partial \Omega \times [0, T],
&\\&h=h_0 \quad \mbox{on}  \quad  \Omega \times \{0\},
\end{aligned}
\right.
\end{equation}
where $\Omega_T=\Omega \times (0, T]$.
By the first variation formula \eqref{VJ} of $\mathcal{J}_\varepsilon$,
we can see that

\begin{lemma}\label{De}
$\mathcal{J}_\varepsilon$ is non-increasing along this flow \eqref{t-Eq}.
\end{lemma}

\subsubsection{The long time existence}

The short time existence can be guaranted by Theorem A in \cite{Tso90}.
\begin{theorem}
There exists a unique $T^\star$, $0<T^\star\leq+\infty$, such that the
flow \eqref{t-Eq} has a unique solution $h$ which belongs to
$C^{1, 1}(\overline{\Omega_T})\cap C^{\infty}(\overline{\Omega}\times (0, T])$ for
all $T<T^\star$ and
\begin{eqnarray}\label{Sh-1}
\mathcal{J}_{\varepsilon}(h(\cdot, t))\leq\mathcal{J}_{\varepsilon}(h_0)
\end{eqnarray}
for all $t<T^\star$.
\end{theorem}

In order to get the long time existence, we first establish the a priori estimates.
In this subsection, let $T<T^\star$ and $\mathbb{A}_t$ be a family of $C$-full sets with the support function $h(\cdot, t)$
which belongs to
$C^{1, 1}(\overline{\Omega_T})\cap C^{\infty}(\overline{\Omega}\times (0, T])$
satisfying the flow \eqref{t-Eq}. Moreover,
by scaling, we choose $h_0$ such that
$\mathcal{J}_{\varepsilon}(h_0)<\mathcal{J}_{0}(h_0)<0$ for $q>p>0$.

\begin{lemma}
For $q>p\geq 1$, we have
\begin{eqnarray}\label{f-C0}
-C \leq h(x, t)< 0, \quad \forall \ (x, t) \in \Omega \times [0, T],
\end{eqnarray}
where $C$ is independent of $\varepsilon$.
Moreover,
\begin{eqnarray}\label{ro-b}
\min_{\overline{\Omega}}\sqrt{|\nabla h|^2+h^2}(\cdot, t)
=\parallel h(\cdot, t)\parallel_{C^0(\overline{\Omega})}\geq \frac{1}{C},
\quad \forall \ t \in [0, T],
\end{eqnarray}
where $C$ is independent of $\varepsilon$.
\end{lemma}

\begin{proof}
Using Lemma \ref{Vq} and \eqref{Sh-1}, we have
\begin{eqnarray}\label{JJJ}
\mathcal{J}_{\varepsilon}(h_0)\geq \mathcal{J}_{\varepsilon}(h)\geq \frac{C}{q}\parallel h \parallel_{C^0(\overline{\Omega})}^{q}-\frac{C}{p}\parallel \varepsilon-h\parallel_{C^0(\overline{\Omega})}^p.
\end{eqnarray}
Hence, we can deduce if $q>p$
\begin{eqnarray*}
\parallel h \parallel_{C^0(\overline{\Omega})}\leq C.
\end{eqnarray*}
Since $\mathcal{J}_{\varepsilon}(h_0)<\mathcal{J}_{0}(h_0)<0$, it follows that
\begin{eqnarray*}
-C\geq \mathcal{J}_{\varepsilon}(h)\geq-\frac{1}{p}\int_{\Omega}(\varepsilon-h)^pf(x)dx.
\end{eqnarray*}
Thus,
\begin{eqnarray*}
\frac{1}{p}\int_{\Omega}(\varepsilon-h)^pf(x)dx\geq C>0,
\end{eqnarray*}
which implies
\begin{eqnarray*}
\parallel h(\cdot, t)\parallel_{C^0(\overline{\Omega})}\geq \frac{1}{C}.
\end{eqnarray*}
Then, \eqref{ro-b} follows from the relation \eqref{hro}.
So we complete the proof.
\end{proof}

\begin{theorem}\label{Dh}
The flow \eqref{t-Eq} exists all the time for $q>p\geq 1$.
Moreover, after choosing a subsequence, the flow converges to a non-zero and smooth solution to
the equation \eqref{1-Eq}.
\end{theorem}

\begin{proof}
Using Lemma \ref{app-lem1}, we transform the flow \eqref{t-Eq} of $h$ to that of $u$
in $U\subset \mathbb{R}^{n-1}$

\begin{equation}\label{u-Eq}
\left\{
\begin{aligned}
&\frac{u_t}{\sqrt{1+x^2}}-\log \det (D^2 u)=-G(x, u, Du)\quad \mbox{in}  \quad U \times (0, T],
&\\
&u=0 \quad \mbox{on}  \quad \partial U \times [0, T],
&\\&u=u_0 \quad \mbox{on}  \quad  U \times \{0\},
\end{aligned}
\right.
\end{equation}
where
\begin{eqnarray*}
G(x, u, Du)=\frac{n-q}{2}\log [|D u|^2+(x \cdot Du-u)^2]+\log [(\varepsilon\sqrt{1+|x|^2}-u)^{p-1}g].
\end{eqnarray*}
Using the relation \eqref{h-u} and the inequality \eqref{ro-b}, we arrive
\begin{eqnarray}\label{D-S}
\Big[|Du|^2+(x\cdot Du-u)^2\Big](x, t)\geq \frac{1}{C}, \quad \forall \ (x, t) \in U \times [0, T].
\end{eqnarray}
Using \eqref{D-S} and the $C^0$ estimate \eqref{f-C0}, we obtain that the right part
of the equation \eqref{u-Eq} satisfies
\begin{eqnarray*}
G(x, u, Du)\leq \frac{n-1}{2}\log(1+|Du|^2)+C,
\quad \forall \ x \in \overline{U}.
\end{eqnarray*}
Then, we can use Lemma \ref{A-gra} and Lemma \ref{D2-i} in Appendix to obtain
the gradient estimates and $C^2$ estimates. Thus, we conclude that
evolution equation \eqref{PMA} is uniformly parabolic on any finite time
interval. Thus, the result of \cite{KS.IANSSM.44-1980.161} and the standard
parabolic theory show that the solution of \eqref{PMA} exists for all
time. Using these estimates again, a subsequence of $h(\cdot, t)$ converges to
a function $h_{\infty}$.
Since $\frac{d}{d t}\mathcal{J}_{\varepsilon}(h(\cdot, t))\leq0$ for any $t>0$ by Lemma \ref{De}, we have
\begin{equation*}
\int_0^t \Big[-\frac{d}{d t}\mathcal{J}_{\varepsilon}(h(\cdot, t))\Big]dt
=\mathcal{J}_{\varepsilon}(0)-\mathcal{J}_{\varepsilon}(t) \leq \mathcal{J}_{\varepsilon}(0),
\end{equation*}
which implies that there exists a subsequence of times $t_j\rightarrow\infty$ such that
\begin{equation*}
-\frac{d}{d t}\mathcal{J}_{\varepsilon}(h(\cdot, t_j)) \rightarrow 0 \quad  \mbox{ as } \ t_j\rightarrow\infty.
\end{equation*}
That is to say
\begin{eqnarray*}
0=\lim_{j\rightarrow +\infty}&&\int_{\Omega}\bigg(\rho_{j}^{q-n}\det (\nabla^2 h_{j}+h_{j} I)-(\varepsilon-h_{j})^{p-1} f\bigg)
\\&&\cdot\bigg(\log\Big[\rho_{j}^{q-n}\det (\nabla^2 h_{j}+h_{j} I)\Big]-\log\Big[(\varepsilon-h_{j})^{p-1} f \Big]\bigg)dx,
\end{eqnarray*}
where we denote $h_j(x)=h(x, t_j)$ and $\rho_j(x)=\rho(x, t_j)$. From this and the a priori estimates of $h_j$,
we conclude that $h_{\infty}$ is a smooth solution to the equation \eqref{t-Eq}.
\end{proof}

\subsubsection{The Dirichlet problem}

\begin{theorem}\label{D}
The Dirichlet problem \eqref{Eq} admits a unique and non-zero solution $h \in C^{\infty}(\overline{\Omega})$ for $q>p\geq 1$.
Moreover, this solution is the minimum of the functional
\begin{eqnarray*}
\mathcal{J}(h):=V_q(h)-\frac{1}{p}\int_{\Omega}(-h)^p f(x)dx.
\end{eqnarray*}
\end{theorem}

\begin{proof}
Theorem \ref{Dh} tells us that there exists a solution $h_\varepsilon \in C^{\infty}(\overline{\Omega})$
solving the Dirichlet problem \eqref{1-Eq}.
Using Lemma \ref{app-lem1}, we transform the Dirichlet problem \eqref{1-Eq} of $h$ to that of $u$
in $U\subset \mathbb{R}^{n-1}$. Then, there exists a solution
$u_\varepsilon \in C^{\infty}(\overline{U})$ that solves the Dirichlet problem
\begin{equation}\label{u-Eq-D}
\left\{
\begin{aligned}
&\det (D^2u_\varepsilon)=[|D u_\varepsilon|^2
+(x \cdot Du_\varepsilon-u_\varepsilon)^2]^{\frac{n-q}{2}}
(\varepsilon\sqrt{1+|x|^2}-u_\varepsilon)^{p-1}g \quad \mbox{in}  \quad U,
&\\
&u_\varepsilon=0 \quad \mbox{on}  \quad \partial U.
\end{aligned}
\right.
\end{equation}
Using \eqref{f-C0}, we have
\begin{eqnarray*}
-C \leq u_\varepsilon(x)< 0, \quad \forall \ x \in \Omega,
\end{eqnarray*}
where the constant $C$ is independent of $\varepsilon$.
Using \eqref{D-S}, the right hand term of the equation \eqref{u-Eq-D} satisfies
\begin{eqnarray*}
[|D u_\varepsilon|^2
+(x \cdot Du_\varepsilon-u_\varepsilon)^2]^{\frac{n-q}{2}}
(\varepsilon\sqrt{1+|x|^2}-u_\varepsilon)^{p-1}g\leq C[1+|D u_\varepsilon|^2]^{\frac{n-1}{2}},
\end{eqnarray*}
where the constant $C$ is also independent of $\varepsilon$.
The gradient estimate and interior estimates of all order can be deduced by
Lemma \ref{D-est} in Appendix. Thus, $u_\varepsilon$ must converge along some subsequence
to some $u \in C^{\infty}(U)\cap C^{0, 1}(\overline{U})$ which is a solution to the following Dirichlet problem
\begin{equation}\label{Nueq}
\left\{
\begin{aligned}
&\det (D^2u)=F(x, u, Du) \quad \mbox{in}  \quad U,
&\\
&u=0 \quad \mbox{on}  \quad \partial U,
\end{aligned}
\right.
\end{equation}
where
\begin{eqnarray*}
F(x, u, Du)=[|D u|^2
+(x \cdot Du-u)^2]^{\frac{n-q}{2}}(-u)^{p-1}g.
\end{eqnarray*}
By the argument of Theorem 1.3 in \cite{Sa14} and Theorem 1.2 in \cite{LS17}, we can derive
that $u \in C^{2, \alpha}(\overline{U})$ for $p\geq1$ and thus
$u \in C^{\infty}(\overline{U})$ which is a solution to \eqref{Nueq}.
Hence, the Dirichlet problem \eqref{Eq} admits a non-zero solution $h \in C^{\infty}(\overline{\Omega})$
for $q>p\geq 1$.

We follow the idea of the proof of Theorem 4.1 in \cite{Tso90} to show the uniqueness.
Let us suppose that the Dirichlet problem \eqref{Nueq} has two solutions $u$ and $v$ with $u-v$ being positive
somewhere in $U$. Assume that the origin is contained in $U$ and we define
\begin{eqnarray*}
u_\lambda(x)=u(\lambda^{-1}x)
\end{eqnarray*}
for $\lambda>1$ and let
\begin{eqnarray*}
\eta_\lambda(x)=\frac{-v(x)}{-u_\lambda(x)}.
\end{eqnarray*}
There exist $\varepsilon$ and $\lambda^\star$ such that
\begin{eqnarray*}
\eta_\lambda(x_\lambda)=\max_{\overline{U}}\eta_\lambda(x)\geq 1+\varepsilon
\end{eqnarray*}
for all $1<\lambda\leq \lambda^\star$. At $x_\lambda$, we have
\begin{eqnarray*}
\frac{Dv}{v}=\frac{\lambda^{-1}D u(\lambda^{-1}x)}{u_{\lambda}}, \quad
v_{ij}=\eta_\lambda(u_{\lambda})_{ij}+(\eta_\lambda)_{ij}u_{\lambda}
\end{eqnarray*}
and the matrix $(\eta_\lambda)_{ij}$ is non-positive definite. Therefore, we have at $x_\lambda$
\begin{eqnarray*}
F(x, v, Dv)&=&\mathrm{det} (D^2 v)
\\&\geq& \eta_{\lambda}^{n-1}(x)\lambda^{-2(n-1)}\mathrm{det} (D^2 u(\lambda^{-1}x))
\\&=& \eta_{\lambda}^{n-1}(x)\lambda^{-2(n-1)}F(\lambda^{-1}x, u(\lambda^{-1}x), D u(\lambda^{-1}x)).
\end{eqnarray*}
Letting $\lambda\rightarrow1$, we conclude that $(1+\varepsilon)^{p-q}\geq 1$ which yields a contradiction.
Thus, the uniqueness follows.

Moreover, we know from the equality \eqref{JJJ} that
$\mathcal{J}(h)\rightarrow+\infty$ as $\parallel h\parallel_{C^0(\Omega)}\rightarrow+\infty$ for $q>p\geq1$.
This implies that the unique solution of the Dirichlet problem \eqref{Eq} is the minimum of the functional.
\end{proof}

\subsection{Supercritical case}

We first solve the perturbed equation \eqref{1-Eq}. Let
\begin{eqnarray*}
\mathcal{I}_{\varepsilon}(h):=V_q(h)-\frac{1}{p}\int_{\Omega}\Big[(\varepsilon-h)^p-\varepsilon^p \Big]f(x)dx,
\end{eqnarray*}
which is just different from $\mathcal{J}_{\varepsilon}$ by a constant. Using Lemma \ref{Vq}, we have
\begin{equation*}
\mathcal{I}_{\varepsilon}(h)\geq a\parallel h \parallel_{C^{0}(\Omega)}^{q}
-b\parallel h \parallel_{C^{0}(\Omega)}^{p}-O(\varepsilon),
\end{equation*}
where $a, b$ are constants depending on $q, p$ and $\Omega$.
It follows that
\begin{equation*}
\mathcal{I}_{\varepsilon}(h)\geq \frac{a}{2}\frac{p-q}{p}\Big(\frac{qa}{pb}\Big)^{\frac{q}{p-q}}
\quad \mbox{for} \ \parallel h \parallel_{C^{0}(\Omega)}=\Big(\frac{qa}{pb}\Big)^{\frac{1}{p-q}}.
\end{equation*}
Set
\begin{equation*}
\sigma:=\Big(\frac{qa}{pb}\Big)^{\frac{1}{p-q}}, \quad \delta:=\frac{a}{2}\frac{p-q}{p}\Big(\frac{qa}{pb}\Big)^{\frac{q}{p-q}},
\end{equation*}
and
\begin{eqnarray}\label{CCC0}
\mathcal{C}_0:=\{h \in C^{\infty}(\overline{\Omega}): \nabla^2h+hI>0 \ \mbox{and} \ h<0 \ \mbox{in} \ \Omega,
\ h|_{\partial\Omega}=0\}.
\end{eqnarray}

By scaling of $h$, there exists $h_0, h_1 \in \mathcal{C}_0$ such that
\begin{eqnarray*}
\parallel h_0\parallel_{C^{0}(\Omega)}<\sigma<\parallel h_1\parallel_{C^{0}(\Omega)}, \quad
\mathcal{I}_\varepsilon(h_0)<\delta, \quad \mathcal{I}_\varepsilon(h_1)<\delta.
\end{eqnarray*}
Thus, the set
\begin{eqnarray*}
\mathcal{P}:=\{\gamma: [0, 1]\mapsto \mathcal{C}_0:
\parallel \gamma(0)\parallel_{C^{0}(\Omega)}<\sigma<\parallel\gamma(1)\parallel_{C^{0}(\Omega)}, \
\mathcal{I}_\varepsilon(\gamma(0))<\delta, \ \mathcal{I}_\varepsilon(\gamma(1))<\delta\}
\end{eqnarray*}
is nonempty and
\begin{eqnarray*}
c=\inf_{\gamma \in \mathcal{P}}\sup_{s \in [0, 1]}\mathcal{I}_\varepsilon(\gamma(s))\geq \delta>0.
\end{eqnarray*}
We will show that $c$ is a critical value of $\mathcal{I}_\varepsilon$ which is attained by some
$h \in \mathcal{C}_0$.

\begin{theorem}\label{D-1}
For $p>q\geq n$, the Dirichlet problem \eqref{1-Eq} admits a non-zero solution
$h \in C^{\infty}(\overline{\Omega})$ with $\mathcal{I}_\varepsilon(h)=c$.
\end{theorem}

\begin{proof}
The proof follows from a mountain-pass lemma as \cite{Tso90}.
For $0<\sigma<1$, pick a path $\gamma \in \mathcal{P}$ such that
\begin{eqnarray*}
\mathcal{I}_\varepsilon(\gamma)=\sup_{s \in [0, 1]}\mathcal{I}_\varepsilon(\gamma(s))< c+\sigma.
\end{eqnarray*}
Then, we have

\begin{lemma}
If $q\geq n$, the parabolic equation \eqref{t-Eq} with initial data
$h(x, 0, s):=\gamma(s)$ has a solution $\gamma(t, s):=h(x, t, s)$ for all $t\geq 0$.
\end{lemma}

\begin{proof}
Since the equivalence of the flow \eqref{t-Eq} and the flow \eqref{u-Eq},
it sufficient to prove the long time existence of the flow \eqref{u-Eq} for $q\geq n$.
Let $v: B_r(0)\subset \Omega\rightarrow \mathbb{R}$ solving
\begin{equation*}
\left\{
\begin{aligned}
&\det (D^2 v)=[|D v|^2
+(x \cdot Dv-v)^2]^{\frac{n-q}{2}}\varepsilon^{p-1}\inf_{B_r(0)}g \quad \mbox{in}  \quad B_r(0),
&\\
&v=0 \quad \mbox{on}  \quad \partial B_r(0).
\end{aligned}
\right.
\end{equation*}
Then, $v$ is a supersolution of \eqref{u-Eq}. Since $q\geq n$,
the comparison principle (see Theorem 14.1 in \cite{Li98}) tells us that it holds for any solution $u(x, t)$ to
the equation \eqref{u-Eq}
\begin{eqnarray*}
|u(0, t)|\geq |v(0)|.
\end{eqnarray*}
Thus, we have by the convexity of $u$
\begin{eqnarray}\label{Gra-2}
x \cdot Du-u\geq \inf_{U}[x \cdot Du-u]=-u(0, t)\geq |v(0)|>0.
\end{eqnarray}
Thus,
\begin{eqnarray*}
|G_u|=\Big|\frac{q-n}{2}\frac{2(x \cdot Du-u)}{|D u|^2+(x \cdot Du-u)^2}+\frac{1-p}
{\varepsilon\sqrt{1+|x|^2}-u}\Big|\leq C.
\end{eqnarray*}
Using the maximal principle to the evolution equation of $u_t$, we have $|u_t|\leq Ce^{Ct}$. Thus,
\begin{eqnarray*}
-C(T)\leq u(x, t)<0, \quad \forall \ (x, t) \in \overline{U} \times [0, T]
\end{eqnarray*}
and
\begin{eqnarray*}
G(x, u, Du)\leq \frac{n-1}{2}\log(1+|Du|^2)+C(T),
\quad \forall \ x \in \overline{U}.
\end{eqnarray*}
The above two inequalities imply the assumptions \eqref{LGC0} and \eqref{LGC} are satisfied.
Hence, we can use Lemma \ref{A-gra} and Lemma \ref{D2-i} to conclude that the
evolution equation \eqref{u-Eq} is uniformly parabolic on $[0, T]$.
Thus, the result of \cite{KS.IANSSM.44-1980.161} and the standard
parabolic theory show that the solution of \eqref{u-Eq} exists for all
time.
\end{proof}

Obviously, $\gamma(t, s)$ belongs to $\mathcal{P}$ for each $t\geq 0$.
Now for each $s \in [0, 1]$, we define
\begin{eqnarray*}
t^*(s):=\sup\{t\geq 0: \mathcal{I}_\varepsilon(\gamma(t, s))\geq c-\sigma\}
\end{eqnarray*}
and set $t^*(s)=0$ if $\mathcal{I}_\varepsilon(\gamma(t, s))< c-\sigma$ for all $t$.

\begin{lemma}
There exists $s_0 \in [0, 1]$ such that $t^*(s_0)=+\infty$.
\end{lemma}

\begin{proof}
We first show that $t^*$ cannot have a uniform upper bound, say $T<+\infty$.
Otherwise, there exists $T<+\infty$ such that
\begin{eqnarray*}
\gamma(T, s) \in \mathcal{P} \quad \mbox{and} \quad \sup_{s}\mathcal{I}_\varepsilon(\gamma(T, s))\leq c-\sigma.
\end{eqnarray*}
This will contradict with the definition of
$c$. Therefore, there exists a sequence $\{s_k\}$ with
\begin{eqnarray*}
\lim_{k\rightarrow +\infty}s_k=s_0 \quad \mbox{and} \quad \lim_{k\rightarrow +\infty}t^*(s_k)=+\infty.
\end{eqnarray*}

We want to prove that $t^*(s_0)=+\infty$. If this is not true, we claim $\gamma(t^*(s_0), s_0)$
is actually a solution of \eqref{1-Eq}. Indeed, if $\frac{d}{dt}\mathcal{I}_\varepsilon$ does not
vanish at $\gamma(t^*(s_0), s_0)$, we can find $t^{\prime}>t^*(s_0)$ such that
\begin{eqnarray*}
\mathcal{I}_\varepsilon(\gamma(t^{\prime}, s_0))<c-\sigma.
\end{eqnarray*}
However,
\begin{eqnarray*}
\lim_{k\rightarrow +\infty}\mathcal{I}_\varepsilon(\gamma(t^{\prime}, s_k))=\mathcal{I}_\varepsilon(\gamma(t^{\prime}, s_0))
\end{eqnarray*}
implies that for $k$ large enough
\begin{eqnarray*}
\mathcal{I}_\varepsilon(\gamma(t^{\prime}, s_k))<c-\sigma.
\end{eqnarray*}
Thus, $t^*(s_k)<t^{\prime}$ which yields a contradiction by letting $k$ go to infinity.
Thus, $\gamma(t^*(s_0), s_0)$ is actually a solution of \eqref{1-Eq}. So,
$\gamma(t^*(s_0), s_0)$ also solves \eqref{t-Eq} with itself as initial datum.
Hence, $t^*(s_0)=+\infty$ which is a contradiction. Therefore, $t^*(s_0)=+\infty$.
\end{proof}

It follows that there exists a long time solution $h(x, t):=\gamma(t, s_0)$ to the flow \eqref{t-Eq} such that
for all $t>0$
\begin{eqnarray*}
\mathcal{I}_\varepsilon(h(\cdot, t))\geq c-\sigma.
\end{eqnarray*}
Then, using the monotonicity of $ \mathcal{I}_\varepsilon(h(\cdot, t))$
along the parabolic flow \eqref{t-Eq}, for any $\sigma>0$, we can choose $T$ sufficiently large such that
\begin{eqnarray}\label{T4.10}
\int_{T}^{+\infty}\Big(-\frac{d}{d t}I_{\varepsilon}(h(\cdot, t))\Big)dt
=\int_{T}^{+\infty}\int_{\Omega}(A-B)(\log A-\log B)dxdt\leq 2\sigma,
\end{eqnarray}
where
\begin{eqnarray*}
A=[|\nabla h|^2+h^2]^{\frac{q-n}{2}}\det (\nabla^2h+hI), \quad B=(\varepsilon-h)^{p-1}f.
\end{eqnarray*}

\begin{lemma}
We have for $t\geq 0$
\begin{eqnarray}\label{C0-2}
\parallel h(\cdot, t)\parallel_{C^0(\Omega)}<C.
\end{eqnarray}
\end{lemma}

\begin{proof}
Using \eqref{T4.10} and the mean value theorem, we know that for
every interval $[k, k+1]$ with $k$ large enough, there exists $t_k \in [k, k+1]$
such that
\begin{eqnarray}\label{T3.3}
\int_{\Omega}(A(t_k)-B(t_k))(\log A(t_k)-\log B(t_k)) dx\leq 2\sigma.
\end{eqnarray}
From now on, we will fix such a $t_k$ and assume all quantities are evaluated at the chosen time
$t_k$ and suppress the dependence on $t_k$.

Let $\alpha>0$ be given by $e^{-\alpha}=1-\frac{p-q}{2p}$, and define $S\subset \Omega$ to be
the set
\begin{eqnarray*}
S:=\{x: |\log A(x)-\log B(x)|\leq \alpha\}.
\end{eqnarray*}
Then, we get from \eqref{T3.3}
\begin{eqnarray}\label{T3.4}
2\sigma&\geq&\int_{\Omega\setminus S}(A-B)(\log A-\log B)dx
\nonumber\\&\geq&\alpha\int_{\Omega\setminus S}|A-B|dx
\nonumber\\&\geq&\alpha(1-e^{-\alpha})\int_{\Omega\setminus S}B dx
\\ \nonumber&\geq&\alpha(1-e^{-\alpha})\varepsilon|\Omega\setminus S|\inf_{\Omega} f,
\end{eqnarray}
where we used $|\frac{A}{B}-1|\geq 1-e^{-\alpha}$ for all $x \in \Omega\setminus S$
to get the last  inequality.
Thus,
\begin{eqnarray*}
|\Omega\setminus S|\leq\frac{2\sigma}{\alpha(1-e^{-\alpha})\varepsilon\inf_{\Omega} f}.
\end{eqnarray*}
Choosing $\sigma$ small enough such that
\begin{eqnarray}\label{T3.5}
|\Omega\setminus S|\leq\frac{1}{2}|\Omega|.
\end{eqnarray}
Note that
\begin{eqnarray}\label{TT}
\mathcal{J}_{\varepsilon}(h)=\int_{S}\bigg[\frac{(-h)A }{q}-\frac{(\varepsilon-h)B}{p} \bigg]dx
+\int_{\Omega\setminus S}\bigg[\frac{(-h)A }{q}-\frac{(\varepsilon-h)B}{p} \bigg]dx.
\end{eqnarray}
We first estimate the first term in \eqref{TT} by using the fact $\frac{A}{B}\geq e^{-\alpha}$ for all $x \in S$
\begin{eqnarray}\label{T3.6}
&&\int_{S}\bigg[\frac{(-h)A }{q}-\frac{(\varepsilon-h)B}{p} \bigg]dx
\nonumber\\&\geq&\int_{S}\bigg[\frac{(-h)(A-B)}{q}+\frac{(p-q)(-h)B}{pq} \bigg]dx-C\varepsilon\parallel h\parallel_{C^0(\Omega)}^{p-1}
\nonumber\\&\geq&-\int_{S}\bigg[\frac{(1-e^{-\alpha})(-h)B}{q}+\frac{(p-q)(-h)B}{pq} \bigg]dx-C\varepsilon\parallel h\parallel_{C^0(\Omega)}^{p-1}
\nonumber\\&\geq&\frac{(p-q)}{2pq}\int_{S}(-h)B dx-C\varepsilon\parallel h\parallel_{C^0(\Omega)}^{p-1}
\nonumber\\&\geq&\frac{(p-q)}{2pq}\int_{S}(-h)^p f dx-C\varepsilon\parallel h\parallel_{C^0(\Omega)}^{p-1}.
\end{eqnarray}
From the fact (see Page 439 in \cite{Tso90})
\begin{eqnarray*}
\lim_{R\rightarrow +\infty}\frac{1}{|U|}\inf\bigg|\Big\{x \in U: D^2 u\geq 0, \ u(x)\leq -R/2, \ \parallel u\parallel_{C^0(U)}=R\Big\}\bigg|=1,
\end{eqnarray*}
Using the relation of $h$ and $u$ in Lemma \ref{app-lem1}, we also have
\begin{eqnarray*}
\lim_{R\rightarrow +\infty}\frac{1}{|\Omega|}\inf\bigg|\Big\{x \in \Omega: D^2 h+hI\geq 0, \ h(x)\leq -R/2, \ \parallel h\parallel_{C^0(\Omega)}=R\Big\}\bigg|=1.
\end{eqnarray*}
Then, we conclude from the above fact and \eqref{T3.5} that there exists $R_1>0$ such that
\begin{eqnarray*}
\Big|S\cap\{x \in \Omega: h(x)\leq -R_1/2\}\Big|\geq\frac{1}{4}|\Omega|
\quad \mbox{for} \quad \parallel h\parallel_{C^0(\Omega)}\geq R_1.
\end{eqnarray*}
Let $E:=\{x \in \Omega: h(x)\leq -\parallel h\parallel_{C^0(\Omega)}/2\}$. If
$\parallel h\parallel_{C^0(\Omega)}\geq R_1$, we have by \eqref{T3.6}
\begin{eqnarray}\label{T3.7}
&&\int_{S}\bigg[\frac{(-h)A }{q}-\frac{(\varepsilon-h)B}{p} \bigg]dx
\nonumber\\&\geq&\frac{p-q}{2pq}\int_{S\cap E}(-h)^p f dx-C\varepsilon\parallel h\parallel_{C^0(\Omega)}^{p-1}
\nonumber\\&\geq&\frac{(p-q)\inf_{\Omega} f}{2^{p+3}pq|\Omega|} \parallel h\parallel_{C^0(\Omega)}^{p}
-C\varepsilon\parallel h\parallel_{C^0(\Omega)}^{p-1}.
\end{eqnarray}
For the second term in \eqref{TT}, we obtain from \eqref{T3.4}
\begin{eqnarray}\label{T3.66}
&&\int_{\Omega\setminus S}\bigg[\frac{(-h)A }{q}-\frac{(\varepsilon-h)B}{p} \bigg]dx
\nonumber\\&\geq&\frac{1}{q}\int_{\Omega\setminus S}(-h)A dx-\frac{1}{p}\int_{\Omega\setminus S}(-h)B dx-C\varepsilon\parallel h\parallel_{C^0(\Omega)}^{p-1}
\nonumber\\&\geq&-\frac{1}{p}\parallel h\parallel_{C^0(\Omega)}\int_{\Omega\setminus S}B dx-C\varepsilon\parallel h\parallel_{C^0(\Omega)}^{p-1}
\nonumber\\&\geq&-\frac{2\sigma}{p\alpha(1-e^{-\alpha})}\parallel h\parallel_{C^0(\Omega)}-C\varepsilon\parallel h\parallel_{C^0(\Omega)}^{p-1}.
\end{eqnarray}
Combining \eqref{T3.7} and \eqref{T3.66}, it yields that
\begin{eqnarray*}
c+\sigma\geq\mathcal{I}_{\varepsilon}(h)\geq
\frac{p-q}{2^{p+3}pq|\Omega|}\parallel h\parallel_{C^0(\Omega)}^{p}-\frac{\sigma}{p\alpha(1-e^{-\alpha})}\parallel h\parallel_{C^0(\Omega)}-C\varepsilon\parallel h\parallel_{C^0(\Omega)}^{p-1}.
\end{eqnarray*}
Thus, $\parallel h(\cdot, t_k)\parallel_{C^0(\Omega)}\leq C$. We know from \eqref{T4.10}
\begin{eqnarray*}
\int_{T}^{+\infty}\int_{\Omega}B(e^{h_t}-1)h_t dxdt\leq 2\sigma.
\end{eqnarray*}
Using the simple facts $-\frac{x}{2}\leq(e^x-1)x$ for $x\leq -1$ and $\frac{1}{3}x^2\leq(e^x-1)x$ for $-1\leq x\leq 0$, we have
\begin{eqnarray*}
&&\frac{1}{p}\int_{\Omega}[\varepsilon-h(t)]^pf dx
\\&=&\frac{1}{p}\int_{\Omega}[\varepsilon-h(t_k)]^pf dx-
\int_{t_k}^{t}\int_{\Omega}B h_t dxdt
\\&\leq&C-\int_{t_k}^{t}\int_{\{h_t\leq-1\}}B h_t dx dt-
\int_{t_k}^{t}\int_{\{0>h_t>-1\}}B h_t dxdt
\\&\leq&C+2\int_{t_k}^{t}\int_{\{h_t\leq-1\}}B h_t(e^{h_t}-1)dx dt-
\int_{t_k}^{t}\int_{\{0>h_t>-1\}}B h_t dx dt
\\&\leq&C+4\sigma+
\bigg(\int_{t_k}^{t}\int_{\{0>h_t>-1\}}B dx dt\bigg)^{\frac{1}{2}}
\bigg(\int_{t_k}^{t}\int_{\{0>h_t>-1\}}Bh_{t}^{2} dx dt\bigg)^{\frac{1}{2}}
\\&\leq&C+4\sigma+
\bigg(\int_{t_k}^{t}\int_{\Omega}B dx dt\bigg)^{\frac{1}{2}}
\bigg(3\int_{t_k}^{t}\int_{\{0>h_t>-1\}}B(e^{h_t}-1)h_{t} dx dt\bigg)^{\frac{1}{2}}
\\&\leq&C+4\sigma+
\sqrt{6\sigma}\bigg(\int_{t_k}^{t}\int_{\Omega}B dx dt\bigg)^{\frac{1}{2}}.
\end{eqnarray*}
Thus,
\begin{eqnarray*}
\frac{\varepsilon}{p}\int_{\Omega}B(t) dx
\leq C+4\sigma+
\sqrt{6\sigma}\bigg(\int_{t_k}^{t}\int_{\Omega}B dx dt\bigg)^{\frac{1}{2}}.
\end{eqnarray*}
Set
\begin{eqnarray*}
M_k=\max\bigg\{\int_{\Omega}B(t) dx: t \in [t_k, t_{k+1}]\bigg\}.
\end{eqnarray*}
Then,
\begin{eqnarray*}
\frac{\varepsilon}{p}M_k
\leq C+4\sigma+
\sqrt{6\sigma}M_{k}^{\frac{1}{2}}.
\end{eqnarray*}
This implies an upper bound on $M_k$ if we choose $\sigma$ small enough. Then, a uniform upper
bound on $\parallel h(\cdot, t)\parallel_{C^0(\Omega)}$ follows from the convexity.
\end{proof}

From the gradient estimate \eqref{Gra-2} and the $C^0$ estimate \eqref{C0-2}, we obtain that the right part of the equation in \eqref{u-Eq} satisfies
\begin{eqnarray*}
G(x, u, Du)\leq \frac{n-1}{2}\log(1+|Du|^2)+C,
\quad \forall \ x \in \overline{U}.
\end{eqnarray*}
Then, we can use Lemma \ref{A-gra} and Lemma \ref{D2-i} in Appendix to conclude that the
evolution equation \eqref{PMA} is uniformly parabolic on any finite time
interval. Thus, the result of \cite{KS.IANSSM.44-1980.161} and the standard
parabolic theory show that the solution of \eqref{PMA} exists for all
time. By these estimates again, a subsequence of $h(\cdot, t)$ converges to
a function $h_{\sigma}$ satisfies $c\leq\mathcal{I}_{\varepsilon}(h_{\sigma})\leq c+\sigma$.
Let $\sigma\rightarrow 0$, $h_\sigma$ converges to
a function $h$ that satisfies $\mathcal{I}_{\varepsilon}(h)=c$.
\end{proof}

\begin{theorem}
The Dirichlet problem \eqref{Eq} admits a non-zero solution $h \in C^{\infty}(\overline{\Omega})$ for $p>q\geq n$.
\end{theorem}

\begin{proof}
Using Theorem \ref{D-1}, there exists a solution $h_{\varepsilon}$ with
$\mathcal{I}_{\varepsilon}(h_{\varepsilon})=c$ solving \eqref{1-Eq}.
Hence,
\begin{eqnarray*}
\mathcal{I}_{\varepsilon}(h_{\varepsilon})=\frac{1}{q}\int_{\Omega}(-h_{\varepsilon})(\varepsilon-h_{\varepsilon})^{p-1}f dx
-\frac{1}{p}\int_{\Omega}[(\varepsilon-h_{\varepsilon})^p-\varepsilon^p] f dx.
\end{eqnarray*}
Since $\delta\leq \mathcal{I}_{\varepsilon}(h_{\varepsilon})\leq C$, we have
\begin{eqnarray*}
C^{-1} \leq \parallel h_\varepsilon\parallel_{C^0(\Omega)}\leq C,
\end{eqnarray*}
where the constant $C$ is independent of $\varepsilon$. Following the same argument in Theorem \ref{D},
$h_\varepsilon$ must converge along some subsequence
to a non-zero function $h \in C^{2, \alpha}(\overline{\Omega})$ and thus $h \in C^{\infty}(\overline{\Omega})$ which is a solution to the Dirichlet problem \eqref{Eq}.
\end{proof}

\subsection{Critical case}

In this subsection, we use the variational method to study the solvability of the equation \eqref{Eq} in the case $p=q\geq 1$.

\begin{theorem}\label{q-critical-exist}
Let $p\geq 1$, there exists a constant $\lambda>0$ such that the Dirichlet problem
\begin{equation}\label{Eq-critical-21}
\left\{
\begin{aligned}
&\det (\nabla^2h+hI)= \lambda f(-h)^{p-1} [|\nabla h|^2+h^2]^{\frac{n-p}{2}}\quad  \mbox{in} \quad \Omega,&\\
&h=0 \quad  \mbox{on} \quad \partial \Omega
\end{aligned}
\right.
\end{equation}
admits a non-zero solution $h\in C^{\infty} (\overline{\Omega})$.
Moreover, if $(\lambda_1, h_1)$ is another pair of such solution, then $\lambda=\lambda_1$ and  there exists a constant $c$ such that $h_1=ch$.
\end{theorem}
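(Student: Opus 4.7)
The plan is to identify the eigenvalue with the Rayleigh-type invariant $\underline{\lambda}_p(\Omega)$ introduced just above the theorem, produce the eigenfunction as a limit of subcritical solutions coming from Theorem \ref{D}, and close the uniqueness by a touching-point comparison exploiting the scale invariance of \eqref{Eq-critical-21}.

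\textbf{Step 1 (Setting up the eigenvalue).} Lemma \ref{Vq} together with $\sup_{\overline{\Omega}} f < \infty$ gives $V_p(h) \geq C\|h\|_{C^{0}}^{p} \geq (C/\sup f)\int_{\Omega} f(-h)^{p} dx$, so $\lambda := \underline{\lambda}_p(\Omega) > 0$. Under the scaling $h\mapsto ch$ one has $V_p(ch) = c^{p} V_p(h)$ and $\int f(-ch)^{p} dx = c^{p} \int f(-h)^{p} dx$, making the Rayleigh quotient scale invariant, so the problem is genuinely of eigenvalue type.

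\textbf{Step 2 (Approximation from the subcritical side).} For each $k\geq 1$ set $q_k := p + 1/k$. By Theorem \ref{D}, the problem $\det(\nabla^2 h + hI) = f(-h)^{p-1}\rho^{n-q_k}$ with $h|_{\partial\Omega}=0$ admits a unique smooth convex solution $h_k$. The rescaling $h\mapsto ch$ multiplies the right-hand side by $c^{q_k-p}=c^{1/k}$, so one may choose $c_k>0$ such that $\tilde h_k := c_k h_k$ satisfies $\int f(-\tilde h_k)^{p} dx = 1$; this $\tilde h_k$ then solves the same equation with $f$ replaced by $\tau_k f$, where $\tau_k := c_k^{1/k}$. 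Testing the equation for $\tilde h_k$ against $\tilde\rho_k^{q_k-n}(-\tilde h_k)$ and using the variational lemma of Section~2.3 yields
\[
q_k V_{q_k}(\tilde h_k) = \tau_k \int_{\Omega} f(-\tilde h_k)^{p}\, dx = \tau_k,
\]
which, combined with the normalization and Lemma \ref{Vq}, traps both $\|\tilde h_k\|_{C^{0}(\overline{\Omega})}$ and $\tau_k$ in a compact subset of $(0,\infty)$. The interior and boundary a priori estimates of Section~5.2 and Lemma \ref{D-est} furnish uniform $C^{\infty}(\overline{\Omega})$ bounds, so along a subsequence $\tilde h_k \to h$ in $C^{\infty}(\overline{\Omega})$ and $\tau_k \to \lambda_{\ast}$, with $h\in\mathcal{C}_0$ solving \eqref{Eq-critical-21} for eigenvalue $\lambda_{\ast}$. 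Testing the limit equation against $(-h)\rho^{p-n}$ gives $\lambda_{\ast} = p V_p(h)$, hence $\lambda_{\ast}\geq \underline{\lambda}_p$; admissibility of $\tilde h_k$ in the infimum defining $\underline{\lambda}_p$ combined with $\tau_k\to\lambda_{\ast}$ gives the reverse inequality, so $\lambda_{\ast}=\lambda$.

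\textbf{Step 3 (Uniqueness of the eigenpair).} Given a second solution $(\lambda_1, h_1)$, a direct computation shows $c h_1$ solves \eqref{Eq-critical-21} with the same eigenvalue $\lambda_1$ for every $c>0$. Choose $c>0$ maximal with $c h_1 \geq h$ in $\Omega$; boundary asymptotics show the touching typically occurs at an interior point $x_0$, where $ch_1 = h$, $\nabla(ch_1)=\nabla h$ and $\nabla^2(ch_1-h)(x_0)\geq 0$ (interior minimum). Thus $\nabla^2(ch_1)+(ch_1)I \geq \nabla^2 h + hI$ at $x_0$, so $\det(\nabla^2(ch_1)+(ch_1)I) \geq \det(\nabla^2 h+hI)$ at $x_0$; since $\rho_{ch_1}(x_0)=\rho_h(x_0)$, substituting into \eqref{Eq-critical-21} yields $\lambda_1\geq \lambda$, and swapping the roles of $h$ and $h_1$ gives $\lambda_1=\lambda$. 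Equality of the determinants together with $A\geq B$ and both positive definite forces $A=B$, so the Hessians of $ch_1$ and $h$ agree at $x_0$; applying the strong maximum principle to $w:=ch_1-h$ via the cofactor linearization of the Monge--Amp\`ere operator then gives $w\equiv 0$, i.e., $h_1=c^{-1}h$. The main obstacle is the compactness as $q_k\downarrow p$: the limiting functional is scale invariant, so without the precise normalization $\int f(-\tilde h_k)^p dx = 1$ and the identity $q_k V_{q_k}(\tilde h_k)=\tau_k$ the sequence could spread or concentrate; making the Pogorelev-type interior estimates and the boundary $C^{2,\alpha}$ estimate of Lemma \ref{D-est} uniform as $q_k\to p$ is the heart of the argument.
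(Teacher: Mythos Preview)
Your strategy---approximate the critical exponent from the subcritical side by sending $q_k\downarrow p$ and rescaling the solutions furnished by Theorem~\ref{D}---is genuinely different from the paper's route. The paper instead fixes $q=p$ from the start and runs a continuation argument in a \emph{different} parameter: it studies the family
\[
\det(\nabla^2 h+hI)=f(x)(1-sh)^{p-1}\rho^{n-p},
\]
shows the set of admissible $s$ is an interval $[0,S)$ with $\|h_s\|\to\infty$ as $s\to S$, and then obtains the eigenfunction as the limit of the rescalings $h_s/\|h_s\|$, with $\lambda=S^{p-1}=\underline{\lambda}_p$. The advantage of the paper's approach is that the regularization $(1-sh)^{p-1}$ keeps the right-hand side strictly positive up to the boundary for each fixed $s$, so Lemma~\ref{D-est}(3) and the results of \cite{CNS1} apply directly; compactness then rests only on the renormalized $C^0$ bound $\|v_s\|=1$. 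Your approach is conceptually clean but, as you yourself flag, forces you to confront the degenerate boundary behaviour $(-\tilde h_k)^{p-1}\to 0$ already at the approximate level, so the uniform $C^{2,\alpha}(\overline U)$ compactness you need does not follow from Lemma~\ref{D-est}(3) and requires the Savin/Le--Savin machinery \cite{Sa14,LS17}, which you invoke only implicitly.

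There is also a genuine gap in your Step~3. The phrase ``boundary asymptotics show the touching typically occurs at an interior point'' is not a proof: since both $h$ and $ch_1$ vanish on $\partial\Omega$, the extremal $c$ may well be determined by the ratio of normal derivatives at a boundary point $x_0\in\partial\Omega$. In that case your pointwise determinant comparison is unavailable. The paper handles this by transferring to $u$ on $U\subset\mathbb R^{n-1}$ and splitting into two subcases: if $\det D^2 u_1(x_0)>0$ one applies the classical Hopf lemma, while if $\det D^2 u_1(x_0)=0$ one uses the degenerate Hopf-type barrier argument of Theorem~\ref{D} in the appendix (the $e^{-x_n^2}-e^{-\sigma^2|x'|^4}$ test function). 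Your uniqueness argument needs this boundary analysis to be complete.
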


\begin{proof}

We divide the proof into three steps.

\textbf{Step 1:} Existence of solutions to  the equation \eqref{Eq-critical-21} in the case $p>1$.

Let $s\in (1, p)$, we consider a family of equations depending on $s$:
\begin{equation}\label{Eq-critical-2121}
\left\{
\begin{aligned}
&\det (\nabla^2h+hI)= \lambda  f(-h)^{s-1} [|\nabla h|^2+h^2]^{\frac{n-p}{2}}\quad  \mbox{in} \quad \Omega,&\\
&h=0\quad  \mbox{on} \quad \partial \Omega,
\end{aligned}
\right.
\end{equation}
where $\lambda$ is an invariant defined by
\begin{eqnarray*}
\lambda (\Omega):=\inf_{h\in \mathcal{C}_0} \left\{\frac{pV_p(h)}{\int_\Omega (-h)^p f(x)dx} \right\}
\end{eqnarray*}
and see \eqref{CCC0} for the definition of $\mathcal{C}_0$. Clearly, $\lambda (\Omega)>0$ by Lemma \ref{Vq}.

Note that \eqref{Eq-critical-2121} is the Euler-Lagrange equation of the functional
$$\mathcal{J}_s(h):=V_p(h)-  \frac{\lambda}{s}\int_{\Omega} (-h)^{s} f(x) dx.$$
By Theorem \ref{D}, the equation \eqref{Eq-critical-2121} admits a unique solution $h_{s}$ which minimizes the functional $\mathcal{J}_s(h)$.
Now we consider the sequence of rescaled solutions
$$v_s:=\frac{h_s}{\|h_s\|_{C^{0}(\Omega)}}.$$
Then,
$$\|h_s\|_{C^{0}(\Omega)}^{p-s}=\frac{\lambda \int_{\Omega} (-v_s)^sf(x)dx}{pV_p(v_s)}.$$
By H\"older inequality and the definition of $\lambda$,
\begin{eqnarray*}
\|h_s\|_{C^{0}(\Omega)}^{p-s} &\leq& \frac{\int_{\Omega} (-v_s)^sf(x)dx}{\int_{\Omega} (-v_s)^pf(x)dx}
\nonumber\\&\leq &  \frac{\left( \int_{\Omega} (-v_s)^pf(x)dx\right)^{\frac{s}{p}}\left(\int_{\Omega} f dx\right)^{\frac{p-s}{p}}}{\int_{\Omega} (-v_s)^pf(x)dx}\\
&=&  \frac{\left(\int_{\Omega} f dx\right)^{\frac{p-s}{p}}}{\left( \int_{\Omega} (-v_s)^pf(x)dx\right)^{\frac{p-s}{p}}}.
\end{eqnarray*}
Thus
$$\|h_s\|_{C^{0}(\Omega)} \leq A^{-\frac{1}{p}} \left(\int_{\Omega} f dx\right)^{\frac{1}{p}},$$
where $A=\inf\{  \int_{\Omega} (-h)^pf(x)dx : h\in \mathcal{C}_0, \|h\|_{C^{0}(\Omega)}=1\}$ is a constant depending on $p, f$ and $\Omega$.

According to the definition of $\lambda$, there exists a function $\tilde{h}\in \mathcal{C}_0$ such that $\|\tilde{h}\|_{C^{0}(\Omega)}=1$ and
$$\left(\frac{\lambda \int_{\Omega} (-\tilde{h})^pf(x)dx}{pV_p(\tilde{h})}\right)^{\frac{1}{p-s}} \geq \frac{1}{2}.$$
Then
$$a:=\left(\frac{\lambda \int_{\Omega} (-\tilde{h})^sf(x)dx}{pV_p(\tilde{h})} \right)^{\frac{1}{p-s}}\geq \frac{1}{2}.$$
On one hand, we know that from the equation \eqref{Eq-critical-2121}
$$\mathcal{J}_s(h_s)=\frac{s-p}{sp} \lambda \int_{\Omega}(-h_s)^s f(x)dx.$$
On the other hand,
\begin{eqnarray*}
\mathcal{J}_s(h_s) &\leq&\mathcal{J}_s(a\tilde{h})
\nonumber\\&= & \frac{a^{p}}{p} pV_p(\tilde{h})-  \frac{\lambda a^s}{s}\int_{\Omega} (-\tilde{h})^{s} f(x) dx\\
&=& \frac{s-p}{sp} a^s \lambda \int_{\Omega} (-\tilde{h})^{s} f(x) dx\nonumber\\
&\leq& \frac{s-p}{sp} 2^{-s} \lambda \int_{\Omega} (-\tilde{h})^{p} f(x) dx<0.
\end{eqnarray*}
It implies that
$$ \|h_s\|_{C^{0}(\Omega)}^s \int_{\Omega}  f(x)dx  \geq  \int_{\Omega} (-h_s)^s f(x)dx \geq 2^{-s} A.$$
Thus,
$$  \|h_s\|_{C^{0}(\Omega)}\geq 2^{-1} \left( \frac{A}{\int_{\Omega}  f(x)dx} \right)^{\frac{1}{s}} \geq 2^{-1} \left( \frac{A}{\int_{\Omega}  f(x)dx} \right).$$
Therefore, $h_s$ has a uniform upper bound and a uniform positive lower bound.
Then, by the argument in the proof of Theorem \ref{D},
$h_s$ converges to a smooth solution of the equation \eqref{Eq-critical-21} in the case $p>1$.

\textbf{Step 2:}  Existence of solutions to the equation \eqref{Eq-critical-21} in the case $p=1$.

Let $p_{\epsilon}=1+\epsilon$. From Step 1, we know that there exist constants $\lambda_{\epsilon}$ and a non-zero
function $h_{\epsilon}$ satisfying the equation
\begin{equation*}
\left\{
\begin{aligned}
&\det (\nabla^2h+hI)= \lambda_{\epsilon}  f(-h)^{p_{\epsilon}-1} [|\nabla h|^2+h^2]^{\frac{n-p_{\epsilon}}{2}}\quad  \mbox{in} \quad \Omega ,&\\
&h=0 \quad  \mbox{on} \quad \partial \Omega.
\end{aligned}
\right.
\end{equation*}
There is no loss of generality in assuming $\|h_{\epsilon}\|_{C^0}=1$.
It is clear that $\lambda_{\epsilon} \leq C$ uniformly in $\epsilon$.
Then, using the same argument in the proof of Step 1, we conclude that there is a nonzero solution $h\in C^{\infty}(\bar{\Omega})$ to the  equation \eqref{Eq-critical-21} in the case $p=1$.

\textbf{Step 3:} Uniqueness of solutions to the equation \eqref{Eq-critical-21}.

Suppose that $(\lambda_1, h_1)$ and $(\lambda_2, h_2)$ are two pairs which solve the equation \eqref{Eq-critical-21}.
Then, using Lemma \ref{app-lem1}, the transform of $h_i$ $(i=1,2)$, denoted by $u_i$, satisfies the following equation
\begin{equation}\label{Eq-critical-22222}
\left\{
\begin{aligned}
&\det (D^2u_i)= \lambda_i g(x)(-u_i)^{p-1} [|D u_i|^2+(x\cdot Du_i-u_i)^2]^{\frac{n-p}{2}}\quad  \mbox{in} \quad U ,&\\
&u_i=0 \quad  \mbox{on}  \quad \partial U.
\end{aligned}
\right.
\end{equation}
Without loss of generality, we can assume that $\lambda_1\leq \lambda_2$.
Since $u_i$ is convex in $U$,
$$\frac{\partial u_i}{\partial \nu}>0\quad \mbox{on} \quad \partial U,$$
where $\nu$ is the unit outward normal to $\partial U$. Thus, for some $t>0$ small, we have
\begin{eqnarray*}
0\leq t(-u_2)\leq -u_1 \quad \mbox{on} \quad \overline{U}.
\end{eqnarray*}
Thus,
\begin{eqnarray*}
t_0:= \sup \{t>0 : t(-u_2)\leq -u_1~\mbox{on}~\overline{U}\}>0.
\end{eqnarray*}
Note that any scaling $(\lambda_2, tu_2)$ also solves equation \eqref{Eq-critical-22222} for any $t>0$,
therefore we can replace $u_2$ by its scalings $t_0 u_2$.
We have  $u_2- u_1\geq 0$  in $U$.
We can divide the proof into two cases.

\textbf{Case 1:} $u_2\equiv u_1$ in $U$, it is easy to check that $\lambda_1= \lambda_2$.

\textbf{Case 2:} there exists a point $x_0\in  U$ such that  $u_2- u_1 > 0$ at $x_0$. We can apply similar method in the proof of uniqueness in Theorem \ref{D} to show that $\lambda_1\geq \lambda_2$. Then our theorem is now proved.
\end{proof}

\section{The Dirichlet problem in  the case $p<1$}

In this section, we establish the existence and optimal global H\"older regularity
for solutions to the Dirichlet problem \eqref{Eq} in the case $p<1$.
Using Lemma \ref{app-lem1}, we only need to consider the equation \eqref{Eq-transfer-1}.

First, we introduce the following comparison principle.

\begin{lemma}\label{app-comparison-1}
Let $U\subset \mathbb{R}^{n-1}$ be a bounded convex domain, $p<1$ and $q\geq n$. Assume that
$u, v\in C^2(U)\cap C^0(\overline{U})$ are convex functions such that
$$ 0 \geq u \geq v \quad \mbox{on} \quad \partial U,$$
and
$$\det (D^2 u)\leq g(x)(-u)^{p-1}\left[|D u|^2+(x\cdot Du-u)^2\right]^{\frac{n-q}{2}}\quad \mbox{in} \quad U,$$
$$\det (D^2 v)\geq g(x)(-v)^{p-1}\left[|D v|^2+(x\cdot Dv-v)^2\right]^{\frac{n-q}{2}}\quad \mbox{in} \quad U.$$
Then $u\geq v$ in $U$.
\end{lemma}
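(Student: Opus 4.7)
The plan is to argue by contradiction and exploit the positive homogeneity of the right-hand side under multiplicative rescaling of the unknown. I would work with the ratio $\eta:=u/v$ rather than the difference $u-v$; this bypasses any delicate sign analysis of the auxiliary quantity $x\cdot Du-u$, because the bracket $|Dw|^2+(x\cdot Dw-w)^2$ scales uniformly by $c^2$ whenever $w$ is rescaled by $c$, regardless of signs.

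Suppose for contradiction that $u(x_\ast)<v(x_\ast)$ at some $x_\ast\in U$. First I would establish that $u,v<0$ throughout $U$. Because $p<1$, the factor $(-v)^{p-1}$ blows up as $v\to 0$, so the subsolution inequality precludes $v$ from vanishing at any interior point: otherwise the convex function $v$ would attain its maximum $0$ in the interior, forcing $v\equiv 0$ on the ambient convex domain by the maximum principle for convex functions, which contradicts the subsolution inequality. The same applies to $u$ (or else $u\equiv 0$ and the conclusion is trivial). Hence $\eta=u/v$ is smooth and positive on $U$ with $\eta(x_\ast)>1$, while the boundary hypothesis $0\geq u\geq v$ yields $\eta\leq 1$ on $\{v<0\}\cap\partial U$; on the portion where $v=0$ (forcing $u=0$) one uses a barrier comparison to show the boundary vanishing rates are comparable, so that $\limsup_{x\to\partial U}\eta(x)\leq 1$. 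Therefore the supremum $c:=\sup_{\overline{U}}\eta$ exceeds $1$ and is attained at some interior point $x_0$.

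At $x_0$, $D\eta(x_0)=0$ gives $Du(x_0)=c\,Dv(x_0)$, and $D^2\eta(x_0)\leq 0$ combined with $v(x_0)<0$ yields the matrix inequality $D^2u(x_0)\geq c\,D^2v(x_0)$; since both sides are positive semidefinite, $\det D^2u(x_0)\geq c^{n-1}\det D^2v(x_0)$. Writing
\[F(w)(x):=g(x)(-w)^{p-1}\bigl[|Dw|^2+(x\cdot Dw-w)^2\bigr]^{(n-q)/2},\]
and using $u(x_0)=cv(x_0)$ together with $Du(x_0)=c\,Dv(x_0)$, a direct substitution shows $F(u)(x_0)=c^{p-1}\cdot c^{n-q}F(v)(x_0)=c^{n-1-(q-p)}F(v)(x_0)$. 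Chaining the subsolution and supersolution inequalities,
\[c^{n-1}F(v)(x_0)\leq c^{n-1}\det D^2v(x_0)\leq \det D^2u(x_0)\leq F(u)(x_0)=c^{n-1-(q-p)}F(v)(x_0),\]
and since $F(v)(x_0)>0$ we conclude $c^{q-p}\leq 1$. But the hypotheses $q\geq n\geq 3$ and $p<1$ force $q-p>0$, contradicting $c>1$.

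The main obstacle will be verifying that the supremum of $\eta$ is attained at an interior point when $v$ (and therefore $u$) vanishes on $\partial U$, since continuity of $u,v$ up to $\overline{U}$ alone does not bound the ratio near $\partial U$. This step requires comparable boundary profiles for $u$ and $v$, supplied by explicit Monge--Amp\`ere barriers near $\partial U$. Once the interior maximum of $\eta$ is in hand, the remainder of the argument is a direct algebraic computation driven entirely by the scaling identity $F(cv)=c^{p-1+n-q}F(v)$.
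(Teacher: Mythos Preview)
Your approach via the ratio $\eta=u/v$ is genuinely different from the paper's, which works with the difference $u-v$. The scaling observation you exploit is correct and elegant: at an interior maximum of $\eta$ with value $c>1$ one has simultaneously $u=cv$, $Du=cDv$, and $D^2u\ge cD^2v$, so the bracket $|Dw|^2+(x\cdot Dw-w)^2$ scales exactly by $c^2$ and the chain $c^{n-1}\det D^2v\le\det D^2u\le F(u)=c^{\,n-1-(q-p)}F(v)\le c^{\,n-1-(q-p)}\det D^2v$ yields the contradiction $c^{q-p}\le1$ with no sign analysis of $x\cdot Dv-v$.

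The gap you flag, however, is fatal under the hypotheses of the lemma. You only have $u,v\in C^2(U)\cap C(\overline U)$ on a bounded convex (not smooth, not strictly convex) $U$, and the boundary condition allows $u=v=0$ on $\partial U$. There is no mechanism here to force $\limsup_{x\to\partial U}u/v\le1$: barriers giving comparable boundary decay rates for \emph{both} $u$ and $v$ would require structural information (regularity of $\partial U$, two-sided bounds on the right-hand side, Hopf-type estimates) that the lemma does not assume. So the interior attainment of $\sup\eta$ is not just a technicality to be filled in later; it is unavailable at this level of generality.

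The paper sidesteps all of this by working with $u-v$, which is continuous on $\overline U$ by hypothesis. Since $u-v\ge0$ on $\partial U$, a negative value anywhere forces an interior minimum $x_0$, where $Du(x_0)=Dv(x_0)$ and $D^2u(x_0)\ge D^2v(x_0)$. Combining $\det D^2u(x_0)\ge\det D^2v(x_0)$ with the sub/supersolution inequalities and the monotonicity of the bracket (here $x\cdot Du-u>x\cdot Dv-v$ with the exponent $(n-q)/2\le0$) gives $(-v)^{p-1}\le(-u)^{p-1}$, contradicting $u(x_0)<v(x_0)$ since $p<1$. Your ratio method trades the mild bracket comparison for a genuine boundary obstruction; for this lemma the difference approach is both simpler and complete.
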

\begin{proof}
Assume $u-v$ attains  its minimum value at $x_0\in U$ with $u(x_0)<v(x_0)<0$, then
$$Du(x_0)=Dv(x_0), \quad D^2u(x_0) \geq D^2 v(x_0),$$
which implies at $x_0$
$$\left[|D v|^2+(x\cdot Dv-v)^2\right]^{\frac{n-q}{2}}>\left[|D u|^2+(x\cdot Du-u)^2\right]^{\frac{n-q}{2}}$$
and
$$(-v)^{p-1}\left[|D v|^2+(x\cdot Dv-v)^2\right]^{\frac{n-q}{2}} \leq (-u)^{p-1}\left[|D u|^2+(x\cdot Du-u)^2\right]^{\frac{n-q}{2}}.$$
Thus we easily find $(-v)^{p-1}(x_0)\leq (-u)^{p-1}(x_0)$, which contradicts $u(x_0)<v(x_0)$ if $p<1$.
\end{proof}

\begin{lemma}\label{app-lowest-1}
Let $U\subset \mathbb{R}^{n-1}$ be a bounded convex domain, $\epsilon>0$, $p<1$ and $q\geq n$.
Assume that $u \in C^2(U)\cap C^0(\overline{U})$ is a convex solution to the equation
\begin{equation*}
\left\{
\begin{aligned}
&\det (D^2 u)=g(x) (\epsilon-u)^{p-1}\left[|D u|^2+(x\cdot Du-u)^2\right]^{\frac{n-q}{2}}\quad \mbox{in} \quad U,&\\
&u= 0 \quad  \mbox{on} \quad  \partial U.
\end{aligned}
\right.
\end{equation*}
There exists a constant $\epsilon_0(n, p, q, g)$ such that
$$\|u\|_{C^{0}(\overline{U})}\geq c(n, p, q, \inf g)\left(|U|^{\ast}\right)^{\frac{1}{q-p}}$$
if $\epsilon< \epsilon_0(n, p, q, g)$,
where $|U|^{\ast}:= \min \{|U|^{2}, |U|^{\frac{n+q-2}{n-1}} \}$.
\end{lemma}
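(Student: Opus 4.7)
Let $M := \|u\|_{L^\infty(U)}$ and let $x_0 \in U$ be a minimum point of $u$, so that $u(x_0) = -M$ and $Du(x_0) = 0$. My strategy is to derive matching pointwise lower and upper bounds on $\det D^2 u(x_0)$ and extract the claimed growth rate for $M$ from their compatibility. Since $Du(x_0) = 0$, one has $u^*(x_0) = x_0\cdot Du(x_0) - u(x_0) = M$, so evaluating the equation at $x_0$ yields
\[
\det D^2 u(x_0) = g(x_0)(\epsilon+M)^{p-1} M^{n-q} \geq c(n,p,q)(\inf_U g)\, M^{n+p-q-1},
\]
once we ensure $\epsilon \leq M$; this is what dictates the choice of $\epsilon_0$ (chosen small enough compared to the sought-after lower bound, giving a short bootstrap).

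For the upper bound on $\det D^2 u(x_0)$, I would first apply Brunn--Minkowski to the convex sublevel sets of $u$: since $t \mapsto |\{u\leq -tM\}|^{1/(n-1)}$ is concave on $[0,1]$ with value $|U|^{1/(n-1)}$ at $t=0$ and $0$ at $t=1$, the section $S := \{u \leq -M/2\}$ satisfies $|S| \geq 2^{-(n-1)}|U|$. The upper bound then splits into two cases corresponding to the two regimes defining $|U|^*$. In the $|U|^2$ regime, I would exploit the quadratic behavior of $u$ near $x_0$ (via a John-ellipsoid normalization of the section, in the spirit of Caffarelli's theory of normalized sections) to conclude $|S|^2\det D^2 u(x_0) \leq C_n M^{n-1}$, i.e., $\det D^2 u(x_0) \leq CM^{n-1}/|U|^2$. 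In the $|U|^{(n+q-2)/(n-1)}$ regime (relevant for small $|U|$), I would instead apply the Alexandrov--Bakelman--Pucci inequality on the section $S$, getting $(M/2)^{n-1} \leq C\mathrm{diam}(S)^{n-1}\int_S \det D^2 u$, and bound $\int_S \det D^2 u$ from above via the equation using $(\epsilon-u)^{p-1}\leq CM^{p-1}$ and $[|Du|^2+(u^*)^2]^{(n-q)/2}\leq CM^{n-q}$ on $S$ (the latter using $u^*\geq cM$, which follows from convexity provided the origin lies in $U$, as dictated by Lemma \ref{app-lem1}'s construction), together with $\mathrm{diam}(S) \leq C|U|^{1/(n-1)}$ (by isodiametric).

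Combining the pointwise lower bound $c M^{n+p-q-1} \leq \det D^2 u(x_0)$ with either upper bound $\det D^2 u(x_0) \leq C M^{n-1}/|U|^*$ yields $M^{q-p} \geq (c/C)|U|^*$, and the lemma follows upon taking $(q-p)$-th roots. The main obstacle lies in justifying the upper bounds on $\det D^2 u(x_0)$: the quadratic-approximation argument for the first regime requires some global control of $u$ near its minimum (which is classical for Monge--Amp\`ere equations with bounded right-hand side, but must be adapted to the singular setting here), while the ABP-based argument for the second regime requires careful pointwise control of $|Du|^2+(u^*)^2$ on $S$ and the use of $\inf g > 0$ to convert the integral bound into a sharp pointwise estimate at $x_0$.
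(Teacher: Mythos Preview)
Your approach has a genuine gap: the inequalities in your ``upper bound'' step run the wrong way, so the scheme cannot close.

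Concretely, evaluate what your ABP branch actually produces. Aleksandrov's estimate on $S=\{u\le -M/2\}$ gives a \emph{lower} bound on $\int_S\det D^2u$ in terms of $M$ and $\mathrm{diam}(S)$. Bounding $\det D^2u$ \emph{from above} on $S$ via the equation (with $(\epsilon-u)^{p-1}\le CM^{p-1}$ and $[|Du|^2+(u^*)^2]^{(n-q)/2}\le CM^{n-q}$) uses $\sup g$, not $\inf g$, and after combining you obtain
\[
M^{q-p}\ \le\ C(\sup g)\,\mathrm{diam}(S)^{n-1}|S|\ \le\ C(\sup g)\,|U|^2,
\]
which is the opposite of the desired lower bound. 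Your first branch fares no better: the identity $|S|^2\det D^2u(x_0)\asymp M^{n-1}$ is a property of exact quadratics, and for general convex $u$ there is no pointwise upper bound on $\det D^2u(x_0)$ in terms of $M$ and $|S|$ alone (a simple $C^2$ smoothing of a function that is steeply parabolic near $x_0$ and then nearly affine shows $\det D^2u(x_0)$ can be made arbitrarily large with $M$ and $|U|$ fixed). Caffarelli's section theory does not help here because it requires two-sided pinching of $\det D^2u$, which you do not have. Also note that the constant in the lemma depends on $\inf g$, which is a hint that $g$ must enter through a \emph{lower} bound on $\det D^2u$, not an upper bound.

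The paper's argument avoids pointwise Hessian bounds entirely. After a John normalization $B_R\subset U\subset B_{nR}$ and the rescaling $v=u/\|u\|_{L^\infty}$, it rewrites the equation as a lower bound $g(x)/(s+\epsilon)^{1-p}\le s^{q-1}\det D^2v\,[\,\cdot\,]^{(q-n)/2}$, integrates over $B_{R/2}$, and then uses the \emph{area formula} $\int_{B_{R/2}}\det D^2v\cdot F(Dv)\,dx=\int_{Dv(B_{R/2})}F(y)\,dy$ together with the elementary gradient bound $|Dv|\le 2/R$ on $B_{R/2}$ to cap the right-hand side by $s^{q-1}(4+4/R^2)^{(q-n)/2}(2/R)^{n-1}|B_1|$. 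This yields $s^{q-1}(s+\epsilon)^{1-p}\ge c(\inf g)\,|U|^*$ directly, from which the lemma follows. If you want to repair your plan, the missing idea is this change of variables to the gradient image, which converts an integral of $\det D^2u$ times a function of $Du$ into something controlled by the size of $Du(B_{R/2})$ rather than by any pointwise Hessian information.
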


\begin{proof}
The proof is similar to that of  Lemma 2.3 in \cite{LNQ-1}. There is no loss of generality in assuming $U$ is normalized, i.e., there exists a constant $R$ such that
$$B_R\subset U \subset B_{(n-1)R}.$$
Let $s=\|u\|_{C^{0}(U)}$ and $v=\frac{u}{s}$, then $v\in C^2(U)\cap C^0(\overline{U})$ is a convex solution to
\begin{equation*}
\left\{
\begin{aligned}
&\det (D^2 v)=s^{1-q}g(x) (\epsilon-sv)^{p-1}\left[|D v|^2+(x\cdot Dv-v)^2\right]^{\frac{n-q}{2}}\quad  \mbox{in} \quad U,&\\
&v=0 \quad  \mbox{on} \quad \partial U
\end{aligned}
\right.
\end{equation*}
with $\|v\|_{C^{0}(\overline{U})}=1$. It follows that
 \begin{eqnarray*}
\frac{g(x)}{(s+\epsilon)^{1-p}} &\leq&\det (D^2 v) s^{q-1} \left[|D v|^2+(x\cdot Dv-v)^2\right]^{\frac{q-n}{2}} \nonumber\\
&\leq&s^{q-1}  \det (D^2 v) \left[\left(1+R^2/2\right)|D v|^2+2\right]^{\frac{q-n}{2}}
\end{eqnarray*}
for any $x\in B_{R/2}$. Integrating both sides over $ B_{R/2}$ and  using area formula, we have
 \begin{eqnarray}\label{app-lowest-for-1}
\frac{\int_{B_{R/2}} g(x)dx }{(s+\epsilon)^{1-p}} &\leq&
s^{q-1} \int_{B_{R/2}} \det (D^2 v) \left[\left(1+R^2/2\right)|D v|^2+2\right]^{\frac{q-n}{2}}  dx \nonumber\\
&=& s^{q-1} \int_{D v( B_{R/2})} \left[\left(1+R^2/2\right)|y|^2+2\right]^{\frac{q-n}{2}}  dy.
\end{eqnarray}
Note that $v\in C^0(\overline{U})$ is convex with $v=0$ on $\partial U$, it is easy to see that
\begin{eqnarray}\label{app-lowest-for-2}
|Dv(x)| \leq \frac{|v(x)|}{\mbox{dist}(x, \partial U)}\leq \frac{2}{R}
\end{eqnarray}
for any $x\in  B_{R/2}$. Substituting \eqref{app-lowest-for-2} into \eqref{app-lowest-for-1} yields
 \begin{eqnarray}\label{app-lowest-for-3}
  s^{q-1} (s+\epsilon)^{1-p} \geq \frac{\int_{B_{R/2}} g(x)dx}{(4+\frac{4}{R^2})^{\frac{q-n}{2}} (\frac{2}{R})^{n-1}|B_1|} \geq c_1(n, p, q, \inf g) |U|^{\ast},
\end{eqnarray}
which implies that
\begin{eqnarray}\label{app-lowest-for-4}
s^{q-p}2^{1-p}\max\{1, \epsilon_0/s\}^{1-p} \geq c_1(n, p, q, \inf g) |U|^{\ast}.
\end{eqnarray}
By choosing $\epsilon_0=\left(\frac{c_1(n, p, q, \inf g) |U|^{\ast}}{2^{2-p}}\right)^{\frac{1}{q-p}}$, we obtain the conclusion of the lemma 3.1.
\end{proof}

Now, we construct supersolutions to the  equation \eqref{Eq-transfer-1} with optimal global H\"older regularity, which  is similar to \cite{LNQ-1, LNQ-2}.
\begin{lemma}\label{app-sub-1}
Let  $p<1$ and $q\geq n\geq 3$, $U\subset \mathbb{R}^{n-1}$ be a bounded convex domain with $0 \in \partial U$ and $U\subset \{x=(x^\prime, x_{n-1}) \subset \mathbb{R}^{n-1}: x_{n-1} >0 \}$. Then there exists a constant $C=C(n, p, q, \mbox{diam}(U), \sup g)$ such that the following function
\begin{equation}\label{Eq-app-11}
v_a(x)=x_{n-1}^a (|x^\prime|^2-C)
\end{equation}
is smooth, convex and satisfies
\begin{equation*}
\left\{
\begin{aligned}
&\det (D^2 v_a) (-v_a)^{1-p}\left[|D v_a|^2+(x\cdot Dv_a-v_a)^2\right]^{\frac{q-n}{2}}\geq g(x)\quad  \mbox{in} \quad U,&\\
&v_a\leq 0 \quad  \mbox{on} \quad \partial U.
\end{aligned}
\right.
\end{equation*}
Here $a=\frac{q-n+2}{q-p}\in(0,1)$.
\end{lemma}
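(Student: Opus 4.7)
The plan is to verify the three claims (smoothness, convexity, boundary condition) and the supersolution inequality by direct computation, and to explain why the exponent $a=(q-n+2)/(q-p)$ is forced by a cancellation of powers of $x_{n-1}$ in the Monge--Amp\`ere expression. Writing $y=x'$ and $t=x_{n-1}$, the Hessian of $v_a$ has the block form
\begin{equation*}
D^2 v_a = \begin{pmatrix} 2t^a I_{n-2} & 2at^{a-1}y \\ 2at^{a-1}y^T & a(a-1)t^{a-2}(|y|^2 - C) \end{pmatrix},
\end{equation*}
and a Schur complement computation gives $\det(D^2 v_a)=2^{n-2}\,a\,t^{(n-1)a-2}[(1-a)C-(a+1)|y|^2]$. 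Since $a\in(0,1)$ (as $p<1\le n-2$ and $q\ge n$) and $|y|\le\mbox{diam}(U)$, choosing $C\ge C_0(a,\mbox{diam}(U))$ makes both the Schur complement and the bottom-right entry strictly positive, giving smoothness and convexity of $v_a$ on $U$. The boundary condition $v_a\le 0$ on $\partial U$, namely $|y|^2\le C$, holds for the same range of $C$.

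Next I would compute the remaining quantities appearing in the PDE. A short calculation yields $-v_a=t^a(C-|y|^2)$, $\;x\cdot Dv_a-v_a=t^a[(1-a)C+(1+a)|y|^2]$, and
\begin{equation*}
|Dv_a|^2+(x\cdot Dv_a-v_a)^2 = t^{2a-2}\bigl[a^2(C-|y|^2)^2+4t^2|y|^2+t^2((1-a)C+(1+a)|y|^2)^2\bigr].
\end{equation*}
Collecting powers of $t$ in the product $\det(D^2 v_a)(-v_a)^{1-p}[|Dv_a|^2+(x\cdot Dv_a-v_a)^2]^{(q-n)/2}$, the total exponent of $t$ is
\begin{equation*}
(n-1)a-2+a(1-p)+(a-1)(q-n) = a(q-p)-(q-n+2),
\end{equation*}
which vanishes precisely for the chosen $a$. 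This balancing is the whole point of the ansatz: the exponent $a=(q-n+2)/(q-p)$ is dictated by requiring the $t$-dependence of the Monge--Amp\`ere expression to become non-degenerate as $t\to 0^+$.

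With the $t$-powers cancelled, and using $q\ge n$ together with the elementary lower bound $|Dv_a|^2+(x\cdot Dv_a-v_a)^2\ge a^2 t^{2a-2}(C-|y|^2)^2$, the supersolution inequality reduces to showing
\begin{equation*}
2^{n-2}\,a^{q-n+1}[(1-a)C-(a+1)|y|^2](C-|y|^2)^{q-n+1-p}\ge g(x)
\end{equation*}
pointwise on $U$. Since $p<1$ and $q\ge n$ give $q-n+1-p>0$, for $|y|\le\mbox{diam}(U)$ the left-hand side grows like $C^{q-n+2-p}$ as $C\to\infty$, so the inequality follows by enlarging $C$ to depend on $n,p,q,\mbox{diam}(U)$ and $\sup g$. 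The main technical obstacle I anticipate is organizing the algebra---in particular checking convexity via the Schur complement and verifying the exponent balance---transparently enough that the role of the specific exponent $a=(q-n+2)/(q-p)$ is visible; once these are in hand, the existence of a sufficiently large $C$ is elementary.
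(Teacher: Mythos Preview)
Your proposal is correct and follows essentially the same approach as the paper: direct computation of $\det(D^2v_a)$, the lower bound $|Dv_a|^2+(x\cdot Dv_a-v_a)^2\ge a^2t^{2a-2}(C-|y|^2)^2$, the exponent balance forcing $a=(q-n+2)/(q-p)$, and then choosing $C$ large. The only cosmetic difference is that the paper diagonalizes the Hessian via cylindrical coordinates in $x'$ whereas you compute the determinant by a Schur complement on the $(x',x_{n-1})$ block structure; both yield the identical formula $\det(D^2v_a)=2^{n-2}a\,t^{(n-1)a-2}[(1-a)C-(1+a)|y|^2]$.
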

\begin{proof}
For $x=(x^\prime, x_{n-1})$, we denote $r=|x^\prime|$, then $v_a=x_{n-1}^a (r^2-C)$ and
\begin{eqnarray*}
(v_a)_r&=&2rx_{n-1}^a,\\
(v_a)_{rr}&=&2x_{n-1}^a,\\
(v_a)_{x_{n-1}}&=&ax_{n-1}^{a-1} (r^2-C),\\
(v_a)_{x_{n-1}x_{n-1}}&=&a(a-1)x_{n-1}^{a-2} (r^2-C),\\
(v_a)_{x_{n-1}r}&=&2rax_{n-1}^{a-1}.
\end{eqnarray*}
In suitable coordinate systems, such as cylindrical in $x^\prime$, the Hessian of $v_a$ has the following form
$$
D^2v_a
= \left(
\begin{array}{ccccc}
                                 \frac{(v_a)_r}{r}& 0& \cdots& 0& 0\\
                                  0& \frac{(v_a)_r}{r}& \cdots& 0& 0\\
                                  \vdots& \vdots& \ddots& \vdots& \vdots\\
                                   0 & 0 & \cdots& (v_a)_{rr}& (v_a)_{x_{n-1}r}  \\
                                   0& 0& \cdots& (v_a)_{x_{n-1}r}& (v_a)_{x_{n-1}x_{n-1}}\\
                           \end{array}\right).
$$
We have
 \begin{eqnarray*}
\det(D^2v_a) &=& \left(\frac{(v_a)_r}{r}\right)^{n-3} \left((v_a)_{x_{n-1}x_{n-1}}(v_a)_{rr}-(v_a)_{x_{n-1}r}^2\right)
\nonumber\\&=& 2^{n-2} x_{n-1}^{an-a-2} \left((a-a^2)C-(a+a^2)r^2 \right).
\end{eqnarray*}
It follows that $v_a$ is smooth and convex in $U$ provided by $C>>\mbox{diam}^2 (U)$. Then
 \begin{eqnarray*}
&&|D v_a|^2 +(x\cdot v_a-v_a)^2\\
 &=&((v_a)_{x_{n-1}}^2+(v_a)_r^2) + (x_{n-1} (v_a)_{x_{n-1}}+r (v_a)_r-v_a)^2\nonumber\\
&=& x_{n-1}^{2a-2} \left(a^2(C-r^2)^2+4r^2x_{n-1}^2\right)+x_{n-1}^{2a}[(1+a)r^2+(1-a)C]^2\nonumber\\
&\geq&x_{n-1}^{2a-2} a^2(C-r^2)^2.
\end{eqnarray*}
Therefore
\begin{eqnarray*}
&&\det (D^2 v_a) (-v_a)^{1-p}\left[|D v_a|^2+(x\cdot Dv_a-v_a)^2\right]^{\frac{q-n}{2}}\\
&\geq& 2^{n-2}  x_{n-1}^{a(q-p)-q+n-2}  \left((a-a^2)C-(a+a^2)r^2 \right)(C-r^2)^{q-n+1-p}a^{q-n} \nonumber\\
&\geq& 2^{n-2} (a+a^2) a^{q-n} \left(\frac{a-a^2}{a+a^2}C-r^2 \right)^{q-n+2-p} \nonumber\\
&\geq& g(x)
\end{eqnarray*}
if we choose $a=\frac{q-n+2}{q-p}$ and $C=C(n, p, q, \sup g) (1+\mbox{diam}^2(U))$ so large.
\end{proof}

\begin{lemma}\label{app-sup-1}
Let  $q\geq n\geq 3$ and $p<1$. For any  $a\in [\frac{q-n+2}{q-p},1)$, we denote by
$$b=\frac{q-1}{q-p}, \quad s=\frac{b}{1-a}.$$
Let $U=\{(x^\prime, x_{n-1}) \subset \mathbb{R}^{n-1}: |x^\prime|<1, 0<x_{n-1}<(1-|x^\prime|^2)^{s}\}$. Then there exists a constant $C=C(n, p, q, \inf g)$ such that the following function
\begin{equation}\label{Eq-app-11111}
w(x)=Cx_{n-1}-Cx_{n-1}^a (1-|x^\prime|)^{b}
\end{equation}
is smooth and satisfies
\begin{equation*}
\left\{
\begin{aligned}
&\det (D^2 w) (-w)^{1-p}\left[|D w|^2+(x\cdot Dw-w)^2\right]^{\frac{q-n}{2}}\leq g(x)\quad  \mbox{in} \quad U,&\\
&w=0 \quad  \mbox{on} \quad \partial U.
\end{aligned}
\right.
\end{equation*}
\end{lemma}
\begin{proof}
Denote $r=|x^\prime|$. As in the proof of Lemma \ref{app-sub-1}, we know that $w=Cx_{n-1}-Cx_{n-1}^a(1-r^2)^{b}$ and
 \begin{eqnarray*}
w_r &=& 2Cbx_{n-1}^a(1-r^2)^{b-1}r,\\
w_{rr}&=&2Cbx_{n-1}^a(1-r^2)^{b-2}[1-(2b-1)r^2],\\
w_{x_{n-1}}&=&C-Cax_{n-1}^{a-1}(1-r^2)^{b},\\
w_{x_{n-1}r}&=&2Cabx_{n-1}^{a-1}(1-r^2)^{b-1}r,\\
w_{x_{n-1}x_{n-1}}&=&-Ca(a-1)x_{n-1}^{a-2}(1-r^2)^{b}.
\end{eqnarray*}
Thus, we have
 \begin{eqnarray*}
\det(D^2w) &=& \left(\frac{w_r}{r}\right)^{n-3} \left(w_{x_{n-1}x_{n-1}}w_{rr}-w_{x_{n-1}r}^2\right)
\nonumber\\&=& 2^{n-2} C^{n-1} a b^{n-2}x_{n-1}^{an-a-2} (1-r^2)^{(b-1)(n-1)}\left(1-a+(1-2b-a)r^2\right)\\
&\leq& 2^{n-2} C^{n-1} a b^{n-2}x_{n-1}^{an-a-2} (1-r^2)^{(b-1)(n-1)} (1-a+|1-2b-a|).
\end{eqnarray*}
Hence,
 \begin{eqnarray*}
&&|D w|^2 +(x\cdot w-w)^2 \\
&=&4C^2b^2x_{n-1}^{2a} (1-r^2)^{2b-2} r^2+ C^2(1-a x_{n-1}^{a-1}(1-r^2)^b)^2\\
&&+(x_{n-1} w_{x_{n-1}}+r w_r-w)^2\nonumber\\
&\leq& 4C^2b^2x_{n-1}^{2a} (1-r^2)^{2b-2}+ 2C^2+2C^2a^2x_{n-1}^{2a-2}(1-r^2)^{2b}\\
&& + C^2x_{n-1}^{2a} (1-r^2)^{2b-2}\left((1-a)(1-r^2) +2br^2\right)^2\\
 &\leq& 2C^2+ C^2x_{n-1}^{2a-2} (1-r^2)^{2b-2} \left( 12b^2+4a^2+2-4a\right)\\
 &\leq& C^2x_{n-1}^{2a-2} (1-r^2)^{2b-2} \left( 12b^2+4a^2+4-4a\right).
\end{eqnarray*}
Therefore
\begin{eqnarray*}
&&\det (D^2 w) (-w)^{1-p}\left[|D w|^2+(x\cdot Dw-w)^2\right]^{\frac{q-n}{2}}\\
&\leq& 2^{n-2} C^{q-p} ab^{n-2} x_{n-1}^{a(q-p)-q+n-2} (1-r^2)^{b(q-p)-(q-1)}(12b^2+4a^2+4-4a)^{\frac{q-n}{2}}\\
&&\cdot(1-a+|1-2b-a|)\\
&\leq& g(x)
\end{eqnarray*}
if we choose a suitable $C=C(n,p,q, \inf g)$. Thus we complete the proof.
\end{proof}

\begin{theorem}\label{exist-results-p2}
Let $U$ be a bounded, open and convex domain in $\mathbb{R}^{n-1}$, $p<1$ and $q\geq n \geq 3$. There exist a unique nontrivial convex solution
$u\in C^{\infty}(U) \cap C^0(\overline{U})$ to the equation \eqref{Eq-transfer-1} with the estimate
\begin{equation}\label{esti-p1}
|u(x)| \leq C(n, p, q, \mbox{diam}(U), \sup g) \left(\mbox{dist}(x, \partial U)\right)^{\frac{q-n+2}{q-p}} \quad~\mbox{for any}~x\in U.
\end{equation}
Moreover, the exponent $\frac{q-n+2}{q-p}$ is optimal, i.e., for any  $a\in (\frac{q-n+2}{q-p},1)$, there exists a bounded convex domain $U\subset \mathbb{R}^{n-1}$ such that the solution of the equation \eqref{Eq-transfer-1} satisfies  $u\notin C^{a}(\overline{U})$.
\end{theorem}
\begin{proof}
We divide our proof into three steps.

\textbf{Step 1:} We show the estimate \eqref{esti-p1} holds.

Let $u\in C^{\infty}(U) \cap C^0(\overline{U})$ be the unique nontrivial convex solution to the  equation \eqref{Eq-transfer-1}, $z$ be an arbitrary point in $U$, $z_0$  be a point in $\partial U$ such that $|z-z_0|=\mbox{dist}(z, \partial U)$. Suppose that the supporting hyperplane $l_{z_0}:=\{x\in \mathbb{R}^{n-1}\mid \mathbf{n}\cdot (x-z_0)=0\}$ to $\partial U$ at $z_0$, where $\mathbf{n}$ is the inner normal unit vector to $\partial U$ at $z_0$. Then
$$U\subset \{x\in \mathbb{R}^{n-1}\mid \mathbf{n}\cdot (x-z_0)\geq 0\}.$$

Define a function
$$v(x)= [\mathbf{n}\cdot (x-z_0)]^a (|x-\mathbf{n}\cdot x|^2-C),$$
where $a=\frac{q-n+2}{q-p}$, $C$ is a large constant to be determined later.
By translation and rotation of coordinates, we can assume that  $\mathbf{n}=(0,\cdots, 0, 1)$, $z_0=0$, then $v=x_{n-1}^a (|x^\prime|^2-C)$. According to Lemma \ref{app-sub-1}, we can choose a suitable constant $C$ such that $v$ is a subsolution to equation  \eqref{Eq-transfer-1}.  Using Lemma \ref{app-comparison-1}, we have
\begin{equation}\label{app-the-for-1}
|u(z)| \leq |v(z)| \leq C|z-z_0|^a =C\left(\mbox{dist}(z, \partial U) \right)^a
\end{equation}
 By the convexity of $u$, we easily obtain $u\in C^{\frac{q-n+2}{q-p}} (\overline{U})$.

\textbf{Step 2:} We prove the existence and uniqueness of solutions to the equation \eqref{Eq-transfer-1}.

Let $U_{\epsilon}$ be a sequence of open, bounded, smooth and strictly convex domains in $\mathbb{R}^{n-1}$ such that $U_{\epsilon} \rightarrow U$ in the Hausdorff distance. Consider the following Monge-Amp\`ere equation
\begin{equation}\label{exist-epsilon-1}
\left\{
\begin{aligned}
&\det (D^2 u_{\epsilon})=g(x) (\epsilon-u_{\epsilon})^{p-1}\left[|D u_{\epsilon}|^2+(x\cdot Du_{\epsilon}-u_{\epsilon})^2\right]^{\frac{n-q}{2}}\quad  \mbox{in} \quad U_{\epsilon} ,&\\
&u_{\epsilon}=0 \quad  \mbox{on} \quad \partial U_{\epsilon} ,
\end{aligned}
\right.
\end{equation}
where $\epsilon<\epsilon_0$, which is given in Lemma \ref{app-lowest-1}. From Theorem 7.1 in \cite{CNS1},
there exists a unique convex solution $u_{\epsilon}\in C^{\infty}(\overline{U_{\epsilon} })$ to the equation \eqref{exist-epsilon-1}. Lemma \ref{app-lowest-1} implies that there exists a constant $c(p, q, n, \inf g)$ such that
$$\|u_{\epsilon}\|_{C^{0}(\overline{U})} \geq c(p, q, n, \inf g) \left(|U_{\epsilon}|^{\ast}\right)^{\frac{1}{q-p}}.$$
We now apply the same argument in Step 1 to obtain that
$$|u_{\epsilon}|(x)\leq C(n, p, q, \mbox{diam}(U), \sup g) \left(\mbox{dist}(x, \partial U_{\epsilon} )\right)^{\frac{q-n+2}{q-p}} \quad~\mbox{for any}~x\in U_{\epsilon}.$$
It follows that $u_{\epsilon}$ is uniformly bounded in $C^{\frac{q-n+2}{q-p}}(\overline{U_{\epsilon}})$. We can choose a subsequence of $u_{\epsilon}$ that uniformly converges to a limit $u\in C^0(\overline{U})$  which satisfies $u=0$ on $\partial U$ and
$$\|u\|_{C^{0}(\overline{U})} \geq c(p, q, n, \inf g) \left(|U|^{\ast}\right)^{\frac{1}{q-p}}.$$
According to Lemma 1.2.3 in \cite{Gut-16}, we know that $u$ is actually an Aleksandrov solution of \eqref{Eq-transfer-1}.

For any $\delta \in (0, \|u\|_{C^{0}(\overline{U})})$, let $U_{\delta}=\{x\in U: u(x)\leq -\delta \}$, which is convex with nonempty interior. Note that
$$-C_{\delta}\leq u\leq -\delta, \quad  c_{\delta} \leq |D u|^2+(x\cdot Du-u)^2 \leq C_{\delta} \quad \mbox{in} \quad U_{\delta},$$
if we choose $\delta$ small enough. Thus the Monge-Amp\`ere measure $M_{u}$, which is the weak limit of $\det D^2u_{\epsilon}$,
satisfies
$$0<c_{\delta} \leq M_{u} \leq C_{\delta}<\infty\quad \mbox{in} \quad U_{\delta}.$$
Therefore $u$ is strictly convex in $U_{\delta}$ and $u\in C^{1, \alpha}(U_{\delta})$ by Theorem 5.4.10 and Theorem 5.4.8 in \cite{Gut-16}.

For any $x_0\in \overline{U_{\delta}}$ and $p_0\in \partial u(x_0)$, we know that there exists a constant $t_0$ such that $\Sigma_{t_0}:=\{ u(x)<l_0(x)=u(x_0)+p_0\cdot(x-x_0)+t_0\} \subset \subset U$ by the similar method in the proof of Theorem 1.1 in \cite{Chen-Huang-19}.  Then by Pogorelov's interior estimates (Theorem 17.19 in \cite{GT01}), we know
$$(l_0-u)|D^2 u| \leq C(n, |u|_{C^{0,1}(\Sigma_{t_0})}, \delta) \quad \mbox{in} \quad \Sigma_{t_0}.$$
It implies $|D^2u|_{U_{\delta}} \leq C_{\delta}$ and the equation is uniformly elliptic in $U_{\delta}$. Using Evans-Krylov's estimates \cite{Ev82, Kr82}, we have
$$\|u\|_{C^{k, \alpha}(\overline{U_{\delta}})} \leq C(\delta, k).$$
We conclude $u\in C^{\infty}(U)$.
Moreover, it is easy to obtain the uniqueness of solution to  \eqref{Eq-transfer-1}  by the comparison principle.

\textbf{Step 3:} We show the optimality of the exponent $\frac{q-n+2}{q-p}$.

Indeed, for any $a\in (\frac{q-n+2}{q-p}, 1)$,  we choose $U$ and the function $w$ as in Lemma \ref{app-sup-1}. It follows that $w$ is a supersolution to the equation  \eqref{Eq-transfer-1}.
We show that $w\geq u$ in $U$. Note that $w=0\geq u$ on $\partial U$. If $w-u$ attains its minimum value on $\overline{U}$ at $y\in U$ with $w(y)<u(y)<0$, then $Dw(y)=Du(y)$ and $D^2 w(y)\geq D^2 u(y)$. It follows that at $y$
$$(-w)^{1-p}\left[|D w|^2+(x\cdot Dw-w)^2\right]^{\frac{q-n}{2}}\leq (-u)^{1-p}\left[|D u|^2+(x\cdot Du-u)^2\right]^{\frac{q-n}{2}},$$
which contradicts $w(y)<u(y)<0$.

For $x=(0, x_{n-1})\in U$, we have
\begin{equation}\label{app-the-for-2}
|u(x)|\geq |w(x)|=C\left(x_{n-1}^a- x_{n-1}\right)\geq \frac{C}{2}x_{n-1}^a=\frac{C}{2} \left(\mbox{dist}(x, \partial U) \right)^a
\end{equation}
by assuming  $x_{n-1}<\log_{\frac{1}{2}} (1-a)$, which implies the optimality of the exponent.
\end{proof}

\begin{proof}[Proof of Theorem \ref{th-main-2}]
The theorem can be easily obtained by   Theorem \ref{exist-results-p2} and Lemma \ref{app-lem1}.
\end{proof}

\section{Appendix}

\subsection{The Monge-Amp\`ere equation in Euclidean space}

We transfer the Monge-Amp\`ere equation \eqref{Eq} on $\Omega \subset S^{n-1}$ to a
Euclidean Monge-Amp\`ere equation on $U \subset \mathbb{R}^{n-1}$.
For $e \in S^{n-1}$, we consider the
restriction of a solution $h$ of \eqref{Eq}  to the hyperplane $e^{\bot}$ tangent to $S^{n-1}$ at $e$, i.e.
\begin{equation*}
u(x)=h(x+e).
\end{equation*}
We consider $\pi: e^{\bot}\rightarrow S^{n-1}$ defined by
\begin{equation*}
\pi(x)=\frac{1}{\sqrt{1+|x|^2}}(x+e).
\end{equation*}
Thus,
\begin{equation*}
u(x)=\sqrt{1+|x|^2}h(\pi(x)).
\end{equation*}
Let $\nabla$, $\overline{\nabla}$ and $D$ be the standard Levi-Civita connections in $S^{n-1}$,
$\mathbb{R}^n$, and $e^{\bot}=\mathbb{R}^{n-1}$.

\begin{lemma}\label{app-lem1}
The Dirichlet problem \eqref{Eq} of $h$ is equivalent to the following Dirichlet problem of $u$
\begin{equation*}
\left\{
\begin{aligned}
&\mathrm{det}(D^2u)=g(x)(-u)^{p-1}\Big[|Du|^2+(x\cdot Du-u)^2\Big]^{\frac{n-q}{2}}\quad \mbox{in}  \quad U\subset \mathbb{R}^{n-1},
&\\
&u=0 \quad \mbox{on}  \quad \partial U,
\end{aligned}
\right.
\end{equation*}
where $U=\pi^{-1}(\Omega)$ and
\begin{equation}\label{gg}
g(x)=f(\pi(x))(1+|x|^2)^{-\frac{n+p}{2}}.
\end{equation}
\end{lemma}

\begin{proof}
Note that
\begin{equation*}
tu(x)=h(tx+te).
\end{equation*}
Differentiating both sides of the above equation with $t$ and $x$ respectively, we obtain
\begin{equation*}
u(x)=\sum_{i=1}^{n}x^i\cdot\overline{\nabla}_{i}h+\overline{\nabla}_{n}h
\end{equation*}
and
\begin{equation*}
tD_iu(x)=t \overline{\nabla}_{i}h(tx+te).
\end{equation*}
Thus, we have by letting $t=\frac{1}{\sqrt{1+|x|^2}}$
\begin{equation*}
u(x)=\sum_{i=1}^{n}x^i\cdot\overline{\nabla}_{i}h(\pi(x))+\overline{\nabla}_{n}h(\pi(x))
\end{equation*}
and
\begin{equation*}
D_iu(x)=\overline{\nabla}_{i}h(\pi(x)).
\end{equation*}
Therefore (see also Page 500 in \cite{Ch-Yau-76}),
\begin{equation}\label{h-u}
|h(\pi(x))|^2+|\nabla h(\pi(x))|^2=|\overline{\nabla}h(\pi(x))|^2=|Du|^2+(x\cdot Du-u)^2.
\end{equation}
On the other hand, we have (see (2.4) in \cite{Ch-Yau-76})
\begin{equation*}
(1+|x|^2)^{\frac{n+1}{2}}\mathrm{det}(D^2 u(x))=\mathrm{det}(\nabla^2 h(\pi(x))+h(\pi(x)) I).
\end{equation*}
Thus,
\begin{equation*}
\mathrm{det}(D^2u)=f(\pi(x))(1+|x|^2)^{-\frac{n+p}{2}}(-u)^{p-1}\Big[|Du|^2+(x\cdot Du-u)^2\Big]^{\frac{n-q}{2}}.
\end{equation*}
\end{proof}

\subsection{The a priori estimates for solutions to the parabolic Monge-Amp\`ere equation}

Let $U$ be an open, bounded, smooth and strictly convex domain in $\mathbb{R}^n$. We denote by
\begin{eqnarray*}
\mathcal{C}_0=\{u \in C^{\infty}(\overline{U}): D^2 u>0, \ u|_{\partial U}=0\}.
\end{eqnarray*}
We consider the initial-boundary problem of the type
\begin{equation}\label{PMA}
\left\{
\begin{aligned}
&u_t-\log \det (D^2 u)=-g(x, u, Du) \quad \mbox{in}  \quad U \times (0, T],
&\\
&u=0 \quad \mbox{on}  \quad \partial U \times [0, T],
&\\&u=u_0 \quad \mbox{on}  \quad  U \times \{0\},
\end{aligned}
\right.
\end{equation}
where $g(x, u, Du)=\log f(x, u, Du)$ and $u_0 \in \mathcal{C}_0$ satisfies the compatibility condition
\begin{eqnarray*}
\det (D^2 u_0)=f(x, u_0, Du_0) \quad \mbox{on} \ \quad \partial U.
\end{eqnarray*}

Let $u \in C^4(U \times (0, T))\cap C^2(\overline{U} \times [0, T])$ be a solution to \eqref{PMA}, and suppose further that
\begin{eqnarray}\label{LGC0}
-K\leq u(x, t)<0, \quad \forall \ (x, t) \in \overline{U} \times [0, T]
\end{eqnarray}
and
\begin{eqnarray}\label{LGC}
0<f(x, u(x), p)\leq C(1+p^2)^{\frac{n}{2}},
\quad \forall \ x \in \overline{U},
\end{eqnarray}
where $C$ is a positive constant depending only on $K$.

Now, we will establish the a priori estimates for solutions to the initial-boundary problem \eqref{PMA}.

\begin{lemma}\label{A-gra}
Let $u \in C^4(U \times (0, T))\cap C^2(\overline{U} \times [0, T])$ be a solution to \eqref{PMA} satisfying
the assumptions \eqref{LGC0} and \eqref{LGC}. Then, we have
\begin{eqnarray*}
|D u|(x, t)\leq C, \quad \forall \ (x, t) \in \overline{U} \times [0, T].
\end{eqnarray*}
\end{lemma}

\begin{proof}
Using the condition \eqref{LGC} and following the same argument in section 7 in \cite{CNS1}, there exists a
convex subsolution $\underline{u} \in C^2(\overline{U})$
\begin{equation*}
\left\{
\begin{aligned}
&\det (D^2 \underline{u})\geq C (1+|D\underline{u}|^2)^{\frac{n}{2}} \quad \mbox{in} \quad  U ,&\\
&\underline{u}=0, \quad \mbox{on} \quad  \partial U.
\end{aligned}
\right.
\end{equation*}
Set $\underline{v}=\mu \underline{u}+u_0$. For large $\mu$, it is easy to show that $\underline{v}$
also satisfies the above inequality with the same boundary value.
Since
\begin{eqnarray*}
\underline{v}=0 \ \mbox{on} \ \partial U \times [0, T), \quad \underline{v}\leq u_0 \ \mbox{on} \ U \times \{0\},
\end{eqnarray*}
we have by maximum principle $\underline{v}\leq u$ in $\overline{U}\times [0, T]$,
it follows that
\begin{eqnarray*}
0\leq \frac{\partial u}{\partial \nu}\leq \frac{\partial \underline{v}}{\partial \nu} \quad \mbox{on} \quad  \partial U,
\end{eqnarray*}
where $\nu$ is the unit outer vector of $\partial U$. Due to the convexity of $u$, we have
\begin{eqnarray*}
|D u|_{C^0(\overline{U}\times [0, T])}\leq \Big|\frac{\partial \underline{v}}{\partial \nu}\Big|_{C^0(\partial U)},
\end{eqnarray*}
which completes the proof.
\end{proof}

Based on the above gradient estimate, we can follow the same arguments in Step 1 and Step 3
in Appendix \cite{Tso90} to obtain the a priori estimates for $u_t$.
Then, we follow almost the same argument in Section 7 in \cite{CNS1} to get the global second order
estimates of $u$ for the variable $x$.

\begin{lemma}\label{D2-i}
Let $u \in C^4(U \times (0, T))\cap C^2(\overline{U} \times [0, T])$ be a solution to \eqref{PMA} satisfying
the assumptions \eqref{LGC0} and \eqref{LGC}. Then, we have
\begin{eqnarray*}
|u_t(x, t)|+|D^2 u(x, t)|\leq C, \quad \forall \ (x, t) \in \overline{U} \times [0, T].
\end{eqnarray*}
\end{lemma}

\subsection{The a priori estimates for solutions to the Monge-Amp\`ere equation}

We consider the a priori estimates of to solutions of the Dirichlet problem
\begin{equation}\label{D-U}
\left\{
\begin{aligned}
&\det(D^2 u)=f(x, u, Du) \quad \mbox{in} \quad  U\subset \mathbb{R}^n ,&\\
&u=0, \quad \mbox{on} \quad  \partial U.
\end{aligned}
\right.
\end{equation}
Let $u \in C^4(U)\cap C^2(\overline{U})$ be a solution to \eqref{PMA}, and suppose further that
\begin{eqnarray*}
-K\leq u(x)<0, \quad \forall \ x \in U
\end{eqnarray*}
and
\begin{eqnarray*}
0<f(x, u(x), D^2u(x))\leq C(1+|D^2u(x)|^2)^{\frac{n}{2}},
\quad \forall \ x \in U,
\end{eqnarray*}
where $C$ is a positive constant depending only on $K$.
\begin{lemma}\label{D-est}
We have

(1) The gradient estimate
\begin{eqnarray*}
|D u|(x)\leq C, \quad \forall \ x \in \overline{U},
\end{eqnarray*}
where $C$ is a positive constant depending only on $K$.

(2)The high order estimates
\begin{eqnarray*}
\parallel u\parallel_{C^{k, \alpha}(U^{\prime})}\leq C, \quad \forall  \  U^{\prime} \subset \subset U,
\end{eqnarray*}
where $C$ is a positive constant depending only on $K$,
$d(U^{\prime}, \partial U)$, $\inf_{U^{\prime}} f$, the bounds on $f$
and its derivatives on $U^{\prime}$.

(3) If $f(x, u(x), Du(x))\geq \eta>0$, we have
\begin{eqnarray*}
\parallel u\parallel_{C^{k, \alpha}(\overline{U})}\leq C,
\end{eqnarray*}
where $C$ is a positive constant depending only on $K$ and $f$.
\end{lemma}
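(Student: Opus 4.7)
The plan is to establish the three estimates in sequence, each building on the previous, with the global estimate (3) being the most technically involved. Throughout I read the growth condition naturally as $0 < f(x,u,Du) \leq C(1+|Du|^2)^{n/2}$ (matching the parabolic version in the same Appendix).

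\emph{Gradient estimate (1).} Since $u$ is convex with $u = 0$ on $\partial U$ and $u < 0$ in $U$, the function $|Du|$ attains its maximum on $\partial U$, so it suffices to bound the outward normal derivative. I would follow the subsolution construction of Section~7 in \cite{CNS1}, producing a strictly convex $\underline{u} \in C^2(\overline U)$ vanishing on $\partial U$ and satisfying $\det D^2 \underline{u} \geq C(1+|D\underline{u}|^2)^{n/2}$ with $C$ chosen from the hypothesized upper bound on $f$. Exactly as in Lemma \ref{A-gra}, the Monge-Amp\`ere comparison principle then forces $\underline{u} \leq u$ in $\overline U$, so at each boundary point $0 \leq \partial_\nu u \leq \partial_\nu \underline{u}$, with the latter controlled by $K$ and the data. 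Convexity propagates this bound inward, yielding $|Du| \leq C$ on $\overline U$.

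\emph{Interior higher-order estimates (2).} With $|Du|$ controlled, on any $U' \subset\subset U$ the right-hand side $f(x,u,Du)$ is uniformly bounded above and bounded below by $\inf_{U'} f > 0$, with bounded derivatives up to order $k$. I would then invoke the Pogorelov interior $C^2$ estimate: the auxiliary function
\begin{equation*}
\Lambda(x) \;=\; \log \lambda_{\max}(D^2 u) + \tfrac{\mu}{2}|Du|^2 + \beta \log(-u),
\end{equation*}
with suitable $\beta,\mu > 0$, must attain its maximum in the interior because $\log(-u) \to -\infty$ at $\partial U$; differentiating at that interior max and using the equation produces $|D^2 u| \leq C$ on $U'$. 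With $\det D^2 u$ bounded above and below and $|D^2 u|$ bounded, the equation is uniformly elliptic on $U'$, so Evans--Krylov \cite{Ev82, Kr82} gives an interior $C^{2,\alpha}$ estimate, and a standard Schauder bootstrap produces $\|u\|_{C^{k,\alpha}(U')} \leq C$.

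\emph{Global higher-order estimates (3).} The new ingredient is a boundary $C^2$ estimate, since interior Pogorelov-type auxiliary functions degenerate at $\partial U$. With $f \geq \eta > 0$ in hand, I would adapt the boundary second-derivative arguments of Caffarelli--Nirenberg--Spruck, as executed in Lemma \ref{D2-b}: localize near a boundary point, flatten $\partial U$ to the graph $\{x_n = \rho(x')\}$, and differentiate $u(x',\rho(x')) = 0$ twice to control the pure tangential second derivatives in terms of $|Du|$. For the mixed tangential-normal derivatives I would employ the barrier
\begin{equation*}
v = \tfrac{1}{2}(B_{\alpha\beta} - \mu \delta_{\alpha\beta})x_\alpha x_\beta + \tfrac{M}{2} x_n^2 - x_n,
\end{equation*}
and apply the maximum principle to $Av \pm T(u-\varphi)$ with $T$ a suitable tangential differential operator. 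Finally the pure normal derivative $u_{nn}|_{\partial U}$ is bounded from above by using the equation $\det D^2 u = f \geq \eta$ together with the already established tangential and mixed bounds. This produces $|D^2 u| \leq C$ on $\overline U$; uniform ellipticity, Evans--Krylov, and Schauder bootstrapping then yield $\|u\|_{C^{k,\alpha}(\overline U)} \leq C$.

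\emph{Main obstacle.} The genuinely delicate step is the pure normal boundary estimate in (3), where the strict convexity of $\partial U$, the positive lower bound $f \geq \eta$, and the barrier construction all interact; this is precisely where the hypothesis $f \geq \eta$ is essential and cannot be relaxed. By contrast, (1) reduces to a standard CNS barrier and (2) is a direct application of the Pogorelov--Evans--Krylov machinery once the gradient is controlled.
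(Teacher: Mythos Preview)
Your proposal is correct and follows essentially the same approach as the paper: the paper's own proof is a terse three-line sketch that cites Lemma~\ref{A-gra} for (1), Pogorelov's interior estimate together with Evans--Krylov for (2), and Theorem~7 of \cite{CNS1} for (3), which is precisely the machinery you have spelled out in detail. Your reading of the growth condition as $0<f\le C(1+|Du|^2)^{n/2}$ is also the correct one, matching the parabolic hypothesis earlier in the Appendix.
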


\begin{proof}
The gradient estimate can be deduced by Lemma \ref{A-gra} in Appendix. By Pogorelov's
interior estimates \cite{GT01, Po76} and Evans-Krylov estimates \cite{Ev82, Kr82},
we have the interior high order estimates. The global high order estimates from Theorem 7 in \cite{CNS1}.
\end{proof}

\bigskip

\bigskip


\begin{thebibliography}{50}
\setlength{\itemsep}{-0pt} \small

\bibitem{AYY} W. Ai, Y. L. Yang, D. P. Ye, The $L_p$ dual Minkowski problem
for unbounded closed convex sets, arXiv preprint, arXiv:2404.09804v1.

\bibitem{Ba-61}
I. J. Bakelman,  A variational problem related to the Monge-Amp\`ere equation, Dokl. Akad. Nauk SSSR, 141 (1961), 1011-1014.

\bibitem{Ba-83}
I. J. Bakelman,  Variational problems and elliptic Monge-Amp\`ere equations,
 J. Differential Geom., 18 (1983), no. 4, 669-699 (1984).

\bibitem{BIS19}
P. Bryan, M. N. Ivaki, J. Scheuer, A unified flow approach to
smooth, even $L_p$-Minkowski problems, Anal. PDE, 12 (2019), 259-280.

\bibitem{CHZ19}
C. Chen, Y. Huang, Y. M. Zhao, Smooth solutions to the $L_p$ dual
Minkowski problem, Math. Ann., 373 (2019), 953-976.

\bibitem{CCL21} H. Chen, S. Chen, Q. R. Li, Variations of a class of Monge-Amp\`ere-type functionals and their
applications, Anal. PDE, 14 (2021), pp. 689-716.

\bibitem{CLLN22}
L. Chen, Y. N. Liu, J. Lu, N. Xiang,
Existence of smooth even solutions to the dual
Orlicz-Minkowski problem, J. Geom. Anal., 32(2) (2022), Paper No. 40, 25 pp.

\bibitem{CTWX22}
L. Chen, Q. Tu, D. Wu, N. Xiang,
Anisotropic Gauss curvature flows and their associated Dual Orlicz-Minkowski problems, Proc. Roy. Soc. Edinburgh Sect. A, 152 (2022), 148-162.

\bibitem{CWX22} L. Chen, D. Wu, N. Xiang, Smooth solutions to the Gauss image problem, Pacific J. Math., 317(2) (2022), 275-295.

\bibitem{CNS1} L. Caffarelli, L. Nirenberg, J. Spruck, The Dirichlet problem for nonlinear second-order elliptic
equations. I. Monge-Amp\`ere equation, Comm. Pure Appl. Math. 37 (1984), no. 3, 369-402.

\bibitem{Chen-Huang-19}
H. Chen, G. G. Huang,  Existence and regularity of the solutions of some singular Monge-Amp\`ere equations, J. Differential Equations, 267 (2019), no. 2, 866-878.

\bibitem{Chen-L19} H. Chen, Q. R. Li,
The $L_p$ dual Minkowski problems and related parabolic
flows, J. Funct. Anal., 281 (2021), 109139.

\bibitem{Ch-Yau-77}
S. Y. Cheng, S. T. Yau,  On the regularity
of the Monge-Amp\`ere equation $\det (\frac{\partial^2 u}{\partial x^i \partial x^j})=F(x, u)$, Comm. Pure Appl. Math., 30 (1977), no.1, 41-68.

\bibitem{Ch-Yau-76}
S. Y. Cheng, S. T. Yau,  On the regularity of the solution of the $n$-dimensional
Minkowski problem, Comm. Pure Appl. Math.,  29 (1976), no. 5, 495-516.

\bibitem{CW95}
K. S.  Chou, X. J. Wang,
Minkowski problems for complete noncompact convex hypersurfaces, Topol.
Methods Nonlinear Anal. 6 (1995), 151-162.

\bibitem{CW}
K. S.  Chou and X. J Wang,
The $L_p$-Minkowski problem and the Minkowski problem in centroaffine geometry,
 Adv. Math.,  205 (2006),  33-83.

\bibitem{CW00}
K. S. Chou, X. J. Wang, A logarithmic Gauss curvature flow and
the Minkowski problem, Ann. Inst. H. Poincar\'e Anal. Non Lin\'eaire, 17
(2000), 733-751.

\bibitem{Ev82} L. Evans, Classical solutions of fully nonlinear, convex, second-order elliptic equations.
Comm. Pure Appl. Math., 35 (1982), no. 3, 333-363.

\bibitem{GT01} D. Gilbarg, N. Trudinger, Elliptic partial differential equations of second order, Reprint of the 1998 edition. Classics in Mathematics. Springer-Verlag, Berlin, 2001. xiv+517 pp.

\bibitem{Gut-16}
C. E. Guti\'errez, The Monge-Amp\`ere equation. Second edition,  Birkha\"user, Boston, 2016.

\bibitem{Huang16} Y. Huang, E. Lutwak, D. Yang, G. Y. Zhang,
Geometric measures in the dual Brunn-Minkowski theory and
their associated Minkowski problems, Acta Math., 216
(2016), 325-388.

\bibitem{HZ}
Y. Huang, Y. M. Zhao, On the $L_p$ dual Minkowski problem, Adv. Math., 332 (2018), 57-84.

\bibitem{HL21}
Y. Huang, J. K. Liu,
Noncompact $L_p$-Minkowski problems, Indiana Univ. Math. J. 70 (2021), 855-880.

\bibitem{JWW21}
Y. S. Jiang, Z. Wang, Y. H. Wu, Multiple solutions of the planar $L_p$ dual Minkowski problem, Calc.
Var. Partial Differential Equations, 60 (2021), pp. Paper No. 89, 16.

\bibitem{JWW23}
Y. S. Jiang, Z. Wang, Y. H. Wu, Variational analysis of the planar $L_p$ dual Minkowski problem, Math. Ann., 386 (2023), 1201-1235.

\bibitem{LLL22}
Q. R. Li, J. K. Liu, J. Lu, Nonuniqueness of solutions to the $L_p$ dual Minkowski problem, Int. Math. Res. Not. IMRN, (2022), 9114-9150.

\bibitem{LSW20}
Q. R. Li, W. M. Sheng, X. J. Wang, Flow by Gauss curvature to the
Aleksandrov and dual Minkowski problems, J. Eur. Math. Soc., 22
(2020), 893-923.

\bibitem{LL20}
Y. N. Liu, J. Lu, A flow method for the dual Orlicz-Minkowski
problem, Trans. Amer. Math. Soc., 373 (2020), 5833--5853.


\bibitem{KS.IANSSM.44-1980.161}
 N. V. Krylov, M. V. Safonov, A property of the solutions of
parabolic equations with measurable coefficients, Izv. Akad. Nauk SSSR Ser.
Mat., 44 (1980), 161-175, 239.

\bibitem{Kr82}
 N. V. Krylov, Bounded inhomogeneous elliptic and parabolic equations,
Izv. Akad. Nauk SSSR Ser. Mat. 46 (1982), no. 3, 487-523, 670.

\bibitem{LYZ} N. Li, D. P. Ye, B. C. Zhu, The dual Minkowski problem
for unbounded closed convex sets, Math. Anna., 388 (2023), 2001-2039.

\bibitem{Li98} G. Lieberman, Second order parabolic differential equations,
Woeld Scientific Publishing, Singapore, 1998. xiv+439 pp.

\bibitem{LNQ-1}
N. Q. Le,  Optimal boundary regularity for some singular Monge-Amp\`ere equations
on bounded convex domains, Discrete Contin. Dyn. Syst., 42 (2022), 2199-2214.

\bibitem{LNQ-2}
N. Q. Le, Remarks on sharp boundary estimates for
singular and degenerate Monge-Amp\`ere equations, Commun. Pure Appl. Anal., 22 (2023), 1701-1720.

\bibitem{LS17} N. Q. Le, O. Savin, Schauder estimates for degenerate Monge-Amp\`ere equations and smoothness of the eigenfunctions, Invent. Math., 207 (2017), no. 1, 389-423.

\bibitem{LE1}
E. Lutwak,
The Brunn-Minkowski-Firey theory. I. Mixed volumes and the Minkowski problem,
J. Differential Geom.,  38 (1993),   131-150.

\bibitem{LYZ-18}
E. Lutwak, D. Yang,  G. Y. Zhang, $L_p$ dual curvature measures, Adv. Math., 329 (2018), 85–132.

\bibitem{P071}
A. Pogorelov,  The regularity of the generalized solutions of the equation $\det(\frac{\partial^2 u}{\partial x_i \partial x_j})=\varphi(x_1, x_2, \cdots, x_n)>0$,  (Russian) Dokl. Akad. Nauk SSSR 200 (1971), 534-537.

\bibitem{Po76}
A. Pogorelov, The Minkowski multidimensional problem, Translated from the Russian
by Vladimir Oliker. Introduction by Louis Nirenberg. Scripta Series in Mathematics. V. H. Winston and Sons,
Washington, D.C.; Halsted Press [John Wiley and Sons], New York-Toronto-London, 1978. 106 pp.

\bibitem{Po80} A. Pogorelov, An analogue of the Minkowski problem for infinite complete convex hypersurfaces.
Dokl. Akad. Nauk SSSR 250 (1980), 553-556.

\bibitem{Sa14} O. Savin, A localization theorem and boundary regularity for a class of degenerate Monge
Amp\`ere equations. J. Differential Equations, 256 (2014), no. 2, 327-388.

\bibitem{Sch13} R. Schneider, Convex bodies: the Brunn-Minkowski theory,
Second edition, No. 151. Cambr. Univ. Press, 2013.

\bibitem{Sch-18}
R. Schneider, A Brunn-Minkowski theory for coconvex sets of finite volume, Adv. Math., 332 (2018), 199-234.

\bibitem{Sch-21}
R. Schneider, Minkowski type theorems for convex sets in cones, Acta Math.  Hung., 164 (2021), 282-295.


\bibitem{TY}
F. Tong, S. T. Yau, Generalized Monge-Amp\`ere functionals and related variational problems, 2023, arXiv preprint,  arXiv:2306.01636v1.

\bibitem{Tso90} K. Tso, On a real Monge-Amp\`ere functional, Invent. math., 101 (1990), 425-448.

\bibitem{Ur-1} J. Urbas, An expansion of convex hypersurfaces, J. Differential Geom., 33 (1991), 91-125.

\bibitem{Ur84} J. Urbas, The equation of prescribed Gauss curvature without boundary conditions. J. Differential Geom.,
20 (1984), 311-327.


\bibitem{YYZ}
J. Yang, D. P. Ye, B. C. Zhu, On the $L_p$ Brunn-Minkowski theory and the $L_p$ Minkowski problem for C-coconvex sets,  Int. Math. Res. Not. IMRN, 7 (2023), 6252-6290.
\end{thebibliography}
\end{document}